\def\volno{0}\fi
\def\volyear{2017}\fi
\def\pagno{000--000}\fi
\newfont{\footsc}{cmcsc10 at 8truept}
\newfont{\footbf}{cmbx10 at 8truept}
\newfont{\footrm}{cmr10 at 10truept}
\renewcommand\paragraph{\@startsection{paragraph}{4}{\z@}
                                    {2ex \@plus.5ex \@minus.2ex}
                                    {-1em}
                                    {\normalfont\normalsize\bfseries}}
\renewcommand\subparagraph{\@startsection{subparagraph}{5}{\parindent}
                                       {2ex \@plus.5ex \@minus .2ex}
                                       {-1em}
                                      {\normalfont\normalsize\bfseries}}
\newlength{\BiblioSpacing}
\renewenvironment{thebibliography}[1]{
\begin{oldthebibliography}{#1}
\setlength{\parskip}{\BiblioSpacing}
\setlength{\itemsep}{\BiblioSpacing}
}
{
\end{oldthebibliography}
}
\def\abstractname{Abstract -}   
\def\abstract{\begin{adjustwidth}{1cm}{1cm} \par    \footnotesize \noindent {\bf \abstractname} 
\def\endabstract{ \end{adjustwidth} \smallskip }}
\newtheorem{theorem}{Theorem}[section]}
\newtheorem{proposition}[theorem]{Proposition}}
\newtheorem{definition}[theorem]{Definition}}
\newtheorem{lemma}[theorem]{Lemma}}
\newtheorem{corollary}[theorem]{Corollary}}
\newtheorem{example}[theorem]{Example}}
\title{\Large\bf Sequences, $q$-multinomial Identities, Integer Partitions with Kinds, and Generalized Galois Numbers}
\author{\sc A. Avalos and M. Bly}
\begin{document}
\setcounter{page}{1}
\maketitle
\thispagestyle{fancy}

\vskip 1.5em

\begin{abstract}
Using sequences of finite length with positive integer elements and the inversion statistic on such sequences, a collection of binomial and multinomial identities are extended to their $q$-analog form via combinatorial proofs. Using the major index statistic on sequences, a connection between integer partitions with kinds and finite differences of the coefficients of generalized Galois numbers is established.
\end{abstract}
 
\begin{keywords}
$q$-analog; inversion statistic; generating functions; multinomial identities; major index statistic; integer partitions with kinds; generalized Galois numbers.
\end{keywords}
 
\begin{MSC}
05A05; 05A15; 05A17; 05A19.
\end{MSC}

\section{Introduction} 
Broadly speaking, the primary topic of study within this paper is $q$-analogs. Introduced in the 19th century \cite{uber}, $q$-analogs are found within many areas of mathematics and related fields, including hypergeometric series \cite{hyper2,hyper3,hyper4,hyper1}, elliptic integrals \cite{el1,el2}, complex nonlinear dynamics \cite{com1,com2}, quantum calculus \cite{quant1,quant2}, and string theory \cite{str1,str2}. Given an expression, a $q$-analog is simply a corresponding expression parameterized by $q$ such that the limit as $q$ approaches 1 yields the original expression. A specific example that is central to this paper is the $q$-multinomial coefficient, which is a particular $q$-analog of multinomial coefficients. Stated explicitly in binomial form \[ \lim_{q \rightarrow 1} {n \choose k}_q \, = \, {n \choose k} \, . \]
These expressions can occur quite naturally: when considering the $k$-dimensional subspaces of an $n$-dimensional vector space over a finite field with $q$ elements, the number of such subspaces is the $q$-binomial coefficient ${n \choose k}_q\,$.


Our approach utilizes sequences of finite length with positive integer entries to provide an accessible treatment of $q$-multinomial coefficients. We will focus on the enumerative combinatorics of such sequences using a pair of elementary discrete statistics: the inversion statistic in Sections 1-2 and the major index statistic in Section 3. This viewpoint yields an intuitive understanding of $q$-multinomial coefficients to facilitate smooth navigation of the results of this paper. 

Two additional key elements in this paper are Galois numbers and integer partitions. The Galois numbers, originally introduced by Goldman and Rota \cite{rota}, are obtained from the $q$-multinomial coefficients. Imagine Pascal's $q$-triangle (see Figure \ref{fig:triangle}), a triangular array containing the expressions ${n \choose k}_q$ rather than the conventional numbers ${n \choose k}$. The $n^{\text{th}}$ Galois number is simply the sum of the expressions in the $n^{\text{th}}$ row of Pascal's $q$-Triangle. On the other hand, an integer partition is a finite series of nonincreasing positive integers. They are pervasive in combinatorics \cite{comb1,comb2,comb3,comb4} and also arise in the study of: number theory \cite{num1,num2,num3}, symmetric polynomials \cite{sym2,sym1}, and group representation theory \cite{rep1,rep2}.


\begin{figure}[!htb]
\begin{center}
$\begin{array}{ccccccc}
 & & & {0 \choose 0}_q & & & \\[3pt]
 & & {1 \choose 0}_q & & {1 \choose 1}_q & & \\[3pt]
 & {2 \choose 0}_q & & {2 \choose 1}_q & & {2 \choose 2}_q & \\[3pt]
 {3 \choose 0}_q & & {3 \choose 1}_q & & {3 \choose 2}_q
 & & {3 \choose 3}_q 
\end{array}$
\end{center}
\caption{\label{fig:triangle} Rows $0$ through $3$ of Pascal's $q$-Triangle}
\end{figure}

Throughout this paper, we will refer to the set $\{ \, 1, \, 2, \, \ldots, \, m \, \}$ as $[m]$ and the set sequences of length $n$ whose elements include $k_1$ 1's, $\ldots$, $k_m$ m's from the set $[m]$ as $\mathcal{S}_n^{\:\! m}(k_1, \ldots, k_m)\,$. In Section 1, we will establish some essential definitions and ideas. In Section 2, we will use the inversion statistic to concisely develop a collection of classical and other less so classical $q$-multinomial identities. In Section 3, we will introduce the major index statistic and use it to demonstrate a connection between generalizations of integer partitions and Galois numbers.



\subsection{Inversion Statistic}

To begin, we will introduce the well-known inversion statistic and a couple observations relevant to results in this paper. A general treatment of the inversion statistic from the literature can be found in \cite{Enum}.

\begin{definition}\label{def:inversions}
Let $n,m$ be nonnegative integers, and let $\sigma = ( \, \sigma_1, \, \ldots, \, \sigma_n \,)$ be a sequence  whose elements are from the set $[m]\,$. Then,
\[ \emph{inv}(\sigma) \, \coloneqq \, \left| \, \{ \, (a,b) \, \mid \, a<b \;\, \emph{and} \;\, \sigma_a > \sigma_b \, \} \, \right| \, . \]
If a particular $\sigma_a$ is fixed, ordered pairs of the form $(a,b)$ that are accounted for by \emph{inv(}$\sigma$\emph{)} shall be referred to as the \textbf{inversions induced by $\bm{\sigma_a}$} or simply \emph{i}$\left(\sigma_a\right)$\,. Should a particular $\sigma_b$ be fixed, ordered pairs of the form $(a,b)$ that are accounted for by \emph{inv(}$\sigma$\emph{)} shall be referred to as the \textbf{inversions received by $\bm{\sigma_b}$} or simply \emph{r}$\left(\sigma_b\right)$\,.
\end{definition}

Figure \ref{fig:sequences} contains some examples.

\begin{figure}[!htb]
\begin{center}
$\begin{array}{ccccc}
2211 & \hspace{.25in} & 2121 & \hspace{.25in} & 2112 \\
\text{inv}(\sigma)=4 && \text{inv}(\sigma)=3 && \text{inv}(\sigma)=2 \\[16pt]
1221 && 1212 && 1122 \\
\text{inv}(\sigma)=2 && \text{inv}(\sigma)=1 && \text{inv}(\sigma)=0
\end{array}$
\end{center}
\caption{\label{fig:sequences} All sequences of length $4$ with two $2$s and two $1$s.}
\end{figure}

\begin{proposition}\label{prop:received-induced}
Let $n,m$ be nonnegative integers, and let $\sigma = ( \, \sigma_1, \, \ldots, \, \sigma_n \,)$ be a sequence  whose elements are from the set $[m]\,$. Then, \[ \emph{inv}(\sigma) \, = \, \sum_{a \in [n]} \emph{i}\left( \sigma_a \right) \, = \, \sum_{b \in [n]} \emph{r}\left( \sigma_b \right) \, . \]
\end{proposition}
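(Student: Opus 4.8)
The plan is a direct double-counting argument: I would enumerate the single set of inversions of $\sigma$ in two different ways, grouping its elements first by their left coordinate and then by their right coordinate. First I would fix $\sigma$ and set
\[ I \, = \, \{ \, (a,b) \, \mid \, a<b \;\text{ and }\; \sigma_a > \sigma_b \, \} \, , \]
so that $\mathrm{inv}(\sigma) = |I|$ holds immediately by Definition~\ref{def:inversions}. The whole proof then reduces to recognizing $\mathrm{i}(\sigma_a)$ and $\mathrm{r}(\sigma_b)$ as the cardinalities of the fibers of the two coordinate projections of $I$, and using the elementary fact that the size of a finite set equals the sum of the sizes of the blocks of any partition of it.

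For the first equality, I would observe that for each fixed $a \in [n]$ the inversions induced by $\sigma_a$ are exactly the pairs in $I$ whose left coordinate equals $a$, i.e.\ $\mathrm{i}(\sigma_a) = |\{\, b \in [n] \mid (a,b) \in I \,\}|$. As $a$ ranges over $[n]$ these fibers are pairwise disjoint and their union is all of $I$, since every inversion has a unique left coordinate and positions inducing no inversions simply contribute $\mathrm{i}(\sigma_a)=0$. Summing the fiber sizes therefore gives $|I| = \sum_{a \in [n]} \mathrm{i}(\sigma_a)$. The second equality is obtained identically after exchanging the roles of the coordinates: here $\mathrm{r}(\sigma_b) = |\{\, a \in [n] \mid (a,b) \in I \,\}|$, and partitioning $I$ by its right coordinate yields $|I| = \sum_{b \in [n]} \mathrm{r}(\sigma_b)$. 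Chaining the two computations with $\mathrm{inv}(\sigma)=|I|$ completes the argument.

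I expect no genuine mathematical obstacle; the only point requiring care is notational, namely verifying that $\mathrm{i}(\sigma_a)$ and $\mathrm{r}(\sigma_b)$ really are the fibers of the coordinate projections restricted to $I$. This follows at once by matching the defining condition of $I$ against the conditions $b>a,\ \sigma_a>\sigma_b$ (for a fixed left index $a$) and $a<b,\ \sigma_a>\sigma_b$ (for a fixed right index $b$) appearing in Definition~\ref{def:inversions}. Once that identification is made explicit, the two summation identities are immediate, and the degenerate case $n=0$ (empty sequence, empty sums) holds trivially.
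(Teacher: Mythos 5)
Your proposal is correct and follows essentially the same route as the paper: both arguments double-count the inversion pairs by partitioning them according to their left coordinate and then their right coordinate (the paper phrases this as a disjoint-union decomposition of the index pairs, you phrase it as fibers of the coordinate projections). No substantive difference.
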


\begin{proof}
Observe the unions expressed below are disjoint.
\[ \{ \, (a,b) \, \mid a < b \, \} \, = \, \bigcup_{a \in [n]} \{ \, (a,b) \, \mid \, a<b \, \} \, = \, \bigcup_{b \in [n]} \{ \, (a,b) \, \mid \, a<b \, \} \, . \]
The result follows from the above statement of equality, and the definitions of: inversions, induced inversions, and received inversions.\end{proof}

\begin{corollary}\label{cor:received-induced}
Let $n,m$ be nonnegative integers, and let $\sigma = ( \, \sigma_1, \, \ldots, \, \sigma_n \,)$ be a sequence whose elements are from the set $[m]\,$. Then, \[ \emph{inv}(\sigma) \, = \, \sum_{\sigma_a \:\! \geq \:\! 2} \emph{i}\left( \sigma_a \right) \, = \, \sum_{\sigma_b \:\! \leq \:\! m-1} \emph{r}\left( \sigma_b \right) \, . \]
\end{corollary}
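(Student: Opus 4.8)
The plan is to derive this corollary directly from Proposition~\ref{prop:received-induced} by showing that each term removed from the two sums is identically zero, so the restricted sums agree with the full ones. The crucial structural fact is simply that the entries of $\sigma$ come from $[m] = \{1, \ldots, m\}$, hence every $\sigma_c$ satisfies $1 \leq \sigma_c \leq m$; the boundary values $1$ and $m$ are exactly what make the dropped terms vanish.

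First I would handle the induced sum. Fix a position $a$ with $\sigma_a = 1$. By Definition~\ref{def:inversions}, the inversions induced by $\sigma_a$ are the pairs $(a,b)$ with $a < b$ and $\sigma_a > \sigma_b$; but $\sigma_a = 1$ while every entry satisfies $\sigma_b \geq 1$, so no such $b$ can exist and therefore $\text{i}(\sigma_a) = 0$. Consequently the positions with $\sigma_a = 1$ contribute nothing, and deleting them from the sum gives $\sum_{a \in [n]} \text{i}(\sigma_a) = \sum_{\sigma_a \geq 2} \text{i}(\sigma_a)$. The second equality follows by the symmetric argument: if $\sigma_b = m$, then an inversion received by $\sigma_b$ would require a position $a < b$ with $\sigma_a > \sigma_b = m$, which is impossible since $\sigma_a \leq m$, so $\text{r}(\sigma_b) = 0$ and the positions with $\sigma_b = m$ may be discarded. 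Chaining these two observations with the equalities of Proposition~\ref{prop:received-induced} completes the argument.

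I do not expect any genuine obstacle here: the entire content is the boundary remark that an entry equal to the minimum value $1$ induces no inversion and an entry equal to the maximum value $m$ receives none. The only point that warrants explicit mention is the use of the hypothesis that the entries lie in $[m]$, since this is precisely what bounds them between $1$ and $m$ and thereby forces the omitted terms to be zero.
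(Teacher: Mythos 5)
Your proposal is correct and matches the paper's own proof, which likewise observes that $\text{i}(\sigma_a)=0$ when $\sigma_a=1$ and $\text{r}(\sigma_b)=0$ when $\sigma_b=m$, then invokes Proposition~\ref{prop:received-induced}. You simply spell out the boundary argument in more detail than the paper does.
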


\begin{proof}
When $\sigma_a$ is equal to $1$ the value of i$\left( \sigma_a \right)$ equals zero. Similarly, when $\sigma_b$ is equal to $m$ the value of r$\left( \sigma_b \right)$ equals zero.
\end{proof}

\subsection{q-binomial and q-multinomial Coefficients}

We will now introduce a formal definition of the $q$-binomial coefficient. The following definition, though somewhat unconventional, is inspired by \cite{Knuth}. It will lead to an intuitive interpretation in terms of the inversion statistic, allowing for a smooth transition to their multinomial counterparts.

\begin{definition}\label{def:q-binomial}
Let $n,k$ be nonnegative integers such that $n\geq k\,$, and let $q$ be an indeterminate. Then
\[ {n \choose k}_q \, \coloneqq \, \sum_{\substack{E \subseteq [n] \\ |E|=k}} {q{\Large \strut}}^{\Big(\, \underset{i=1}{\overset{k}{\scriptstyle \sum}} (n-e_i)-(k-i) \, \Big) } \, .  \] where $E \, = \, \{ \, e_1, \, \ldots, \, e_k \, \}$ with $e_i < e_{i+1}$ for every $1\leq i \leq k-1\,$.
\end{definition}

Noting that the number of subsets of $[n]$ of cardinality $k$ is exactly ${n \choose k}\,$, we can see that letting $q \rightarrow 1$ yields the corresponding standard binomial coefficient.

Figure \ref{fig:q-binomial} contains some examples. Observe the parallelism between Figures \ref{fig:sequences} and \ref{fig:q-binomial}.

\begin{figure}[!htb]
\begin{center}
$\begin{array}{ccccc}
\{\, 1,2\,\} & \hspace{.25in} & \{\, 1,3\,\} & \hspace{.25in} & \{\, 1,4\,\} \\[1.5pt]
q^4 && q^3 && q^2 \\[16pt]
\{\, 2,3\,\} && \{\, 2,4\,\} && \{\, 3,4\,\} \\
q^2 && q^1 && q^0
\end{array}$
\end{center}
\caption{\label{fig:q-binomial} The sets associated with the terms of $\,{4 \choose 2}_q\, = \, q^4+q^3+2q^2+q+1\,$.}
\end{figure}

\begin{proposition}\label{prop:inv_binom}
If $n,k$ are nonnegative integers such that $n\geq k\,$ and $q$ is an indeterminate, then
\[ {n \choose k}_q \, = \, \sum_{\sigma \in \mathcal{S}_n^{\:\!2}(k,n-k)} q^{\emph{inv}(\sigma)} \, . \]
\end{proposition}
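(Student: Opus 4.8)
The plan is to exhibit an exponent-preserving bijection between the two index sets appearing on each side of the identity. By Definition \ref{def:q-binomial}, the left-hand side is a sum over the $k$-element subsets $E = \{e_1 < \cdots < e_k\}$ of $[n]$, with $E$ contributing $q^{\mathrm{exp}(E)}$ where $\mathrm{exp}(E) = \sum_{i=1}^{k}\big[(n - e_i) - (k-i)\big]$. The right-hand side is a sum over two-letter sequences of length $n$ with a prescribed number of $1$'s and $2$'s. So it suffices to set up a bijection $E \mapsto \sigma^{E}$ from these subsets to the relevant sequence class and to verify that $\mathrm{inv}(\sigma^{E}) = \mathrm{exp}(E)$ for every $E$.

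First I would define the bijection by letting $\sigma^{E}$ be the sequence that carries a $2$ in each position belonging to $E$ and a $1$ in every other position. Since a two-letter sequence is completely determined by the set of positions holding its $2$'s, and that set may be any subset of $[n]$ of the appropriate size, this map is a bijection between the $k$-subsets $E$ and the corresponding sequence class. To land in the set $\mathcal{S}_n^{2}(k,n-k)$ exactly as stated, one may instead apply the same correspondence to the complementary positions, or use the reversal involution $\sigma \mapsto (\sigma_n, \ldots, \sigma_1)$ together with the symmetry ${n \choose k}_q = {n \choose n-k}_q$, to pass between the two orientations; I would fix the orientation once and for all so that the positions of $E$ carry the larger symbol.

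The heart of the argument is the identity $\mathrm{inv}(\sigma^{E}) = \mathrm{exp}(E)$, which I would prove termwise using the induced-inversion decomposition of Proposition \ref{prop:received-induced}, namely $\mathrm{inv}(\sigma) = \sum_{a} \mathrm{i}(\sigma_a)$. For a fixed position $e_i \in E$ (a letter $2$), the inversions it induces are exactly the pairs $(e_i, b)$ with $b > e_i$ and $\sigma^{E}_b = 1$, that is, the positions strictly to the right of $e_i$ that do not lie in $E$. There are $n - e_i$ positions strictly to the right of $e_i$, and among these the ones still belonging to $E$ are precisely $e_{i+1}, \ldots, e_k$, of which there are $k - i$; hence $\mathrm{i}(\sigma^{E}_{e_i}) = (n - e_i) - (k - i)$. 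By Corollary \ref{cor:received-induced} the letters equal to $1$ induce no inversions, so summing the induced inversions over the $2$'s gives $\mathrm{inv}(\sigma^{E}) = \sum_{i=1}^{k}\big[(n-e_i)-(k-i)\big] = \mathrm{exp}(E)$.

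Summing $q^{\mathrm{inv}(\sigma^{E})} = q^{\mathrm{exp}(E)}$ over all $k$-subsets $E$ then matches the two sides term by term and completes the proof. The one place demanding care is the bookkeeping in the previous paragraph: I must read the quantity $(n - e_i) - (k - i)$ as the count of positions to the right of $e_i$ that are \emph{not} in $E$, and I must orient the alphabet so that the symbol occupying the positions of $E$ is the larger one, so that those positions genuinely induce inversions rather than receive them. Once the orientation is fixed consistently, the remainder is a direct computation.
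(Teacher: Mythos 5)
Your proof is correct and follows essentially the same route as the paper's: both send each $k$-subset $E=\{e_1<\cdots<e_k\}$ to the sequence carrying a $2$ in exactly the positions of $E$, and both verify termwise that the $2$ at position $e_i$ induces precisely $(n-e_i)-(k-i)$ inversions, summing via the induced-inversion decomposition. The orientation point you flag is real---the paper itself places the $2$'s at the positions of $E$ even though its notation $\mathcal{S}_n^{\:\!2}(k,n-k)$ nominally prescribes $k$ ones---but since you fix once and for all the convention that $E$ carries the larger symbol and do not actually rely on the symmetry ${n \choose k}_q={n \choose n-k}_q$ (which would be circular here, as Proposition \ref{prop:binom-sym} presupposes the present proposition), your argument is sound.
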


\begin{proof}
Let $E \subseteq [n]$ be of cardinality $k\,$, and let $\sigma = ( \, \sigma_1, \, \ldots, \, \sigma_n \,)$ be the sequence in $\mathcal{S}_n^{\:\!2}(k,n-k)$ such that $\sigma_a$ is $2$ precisely when $a \in E\,$. Fix some $a \in E$ and consider $\sigma_a\,$. The ordered pairs $(a,b)$ accounted for by $\text{inv}(\sigma)$ correspond to indices $b$ such that $\sigma_b$ is $1\,$ and $b>a$. Notice that $n-e_i$ equals $n-a$ and counts the number of indices $j$ such that $j>a\,$. Also notice that $k-i$ counts the numbers of elements $\sigma_j$ such that $j>a$ and $\sigma_j$ is $2\,$. Hence, $(n-e_i)-(k-i)$ counts all ordered pairs $(a,b)$ of interest. The result follows from observing that every $\sigma \in \mathcal{S}_n^{\:\!2}(k,n-k)$ can be attained similarly by some $E \subseteq [n]\,$. \end{proof}
$\;$\\[-1.5em]

In other words, the polynomial ${n \choose k}_q$ is the generating function for the statistic of inversions on the set $\mathcal{S}_n^{\:\!2}(k,n-k)\,$, a standard result which can be found in \cite{Enum}. The following definition and proposition provides the well-known $q$-multinomial generalization.

\begin{definition}
If $m,n,k_1,\ldots,k_m$ are nonnegative integers such that $k_1+\cdots + k_m=n\,$, then 
\[{n \choose {k_1, \, \ldots, \, k_m}}_q \, \coloneqq \, {n \choose k_m}_q{{n-k_m} \choose k_{m-1}}_q \cdots {{n-k_m-\cdots -k_2} \choose k_1}_q \, .\]
\end{definition}

\begin{proposition}\label{prop:inv_multinom}
If $m,n,k_1, \ldots, k_m$ are nonnegative integers such that $k_1+\cdots + k_m=n\,$, then
\[ {n \choose {k_1, \, \ldots, \, k_m}}_q \, = \, \sum_{\sigma \in \mathcal{S}_n^{\:\! m}(k_1, \ldots, k_m)} q^{\emph{inv}(\sigma)} \, . \]
\end{proposition}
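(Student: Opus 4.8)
The plan is to prove the identity by induction on $m$, the number of distinct values allowed, using Proposition \ref{prop:inv_binom} as the base case and the product in the \emph{definition} of the $q$-multinomial coefficient as the recursive scaffold. For $m=2$ the claim is precisely Proposition \ref{prop:inv_binom} (and $m=1$ is the trivial one-term sum). For the inductive step I would assume the result for $m-1$ values and establish it for $m$.

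The heart of the argument is a bijective decomposition of each $\sigma \in \mathcal{S}_n^{\:\! m}(k_1, \ldots, k_m)$ that mirrors the factorization ${n \choose {k_1, \ldots, k_m}}_q = {n \choose k_m}_q \, {{n-k_m} \choose {k_1, \ldots, k_{m-1}}}_q$. Given $\sigma$, I would record the positions occupied by the value $m$ as a sequence $\tau \in \mathcal{S}_n^{\:\!2}(k_m, n-k_m)$ (placing a $2$ exactly where $\sigma$ holds an $m$ and a $1$ elsewhere), and record the subsequence of all non-$m$ entries, read left to right, as a sequence $\sigma' \in \mathcal{S}_{n-k_m}^{\:\! m-1}(k_1, \ldots, k_{m-1})\,$. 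The map $\sigma \mapsto (\tau, \sigma')$ is a bijection: from $\tau$ one recovers the slots reserved for the $m$'s, and one fills the remaining slots with the entries of $\sigma'$ in order.

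The main obstacle, and the step deserving the most care, is showing that inversions split additively under this decomposition, namely $\text{inv}(\sigma) = \text{inv}(\tau) + \text{inv}(\sigma')\,$. The key observation is that $m$ is the maximal value, so every inversion $(a,b)$ involving a position holding an $m$ must have that $m$ in the earlier (and larger) slot $a$; consequently the inversions of $\sigma$ partition into those induced by the $m$'s and those among the non-$m$ entries. Invoking Proposition \ref{prop:received-induced}, the inversions induced by the $m$'s are exactly counted by $\text{inv}(\tau)$, while each non-$m$ entry induces inversions only against strictly smaller — hence also non-$m$ — entries, whose relative order is preserved by the deletion producing $\sigma'$; thus these contribute exactly $\text{inv}(\sigma')\,$.

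With the additive splitting in hand, the generating-function sum factors along the bijection:
\[ \sum_{\sigma \in \mathcal{S}_n^{\:\! m}(k_1, \ldots, k_m)} q^{\text{inv}(\sigma)} \, = \, \left( \sum_{\tau \in \mathcal{S}_n^{\:\!2}(k_m, n-k_m)} q^{\text{inv}(\tau)} \right) \left( \sum_{\sigma' \in \mathcal{S}_{n-k_m}^{\:\! m-1}(k_1, \ldots, k_{m-1})} q^{\text{inv}(\sigma')} \right) . \]
Applying Proposition \ref{prop:inv_binom} to the first factor and the inductive hypothesis to the second, and then recognizing the defining recursion ${n \choose {k_1, \ldots, k_m}}_q = {n \choose k_m}_q \, {{n-k_m} \choose {k_1, \ldots, k_{m-1}}}_q$ for the $q$-multinomial coefficient, completes the proof.
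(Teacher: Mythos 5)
Your proof is correct and takes essentially the same approach as the paper: both peel off the maximal letter $m$, observe that the inversions it induces are counted by ${n \choose k_m}_q$ via the binary reduction of Proposition \ref{prop:inv_binom}, and handle the remaining letters inside the subsequence of non-$m$ entries. The only difference is presentational — you package the argument as an explicit induction with the bijection $\sigma \mapsto (\tau,\sigma')$ and the additive splitting $\text{inv}(\sigma)=\text{inv}(\tau)+\text{inv}(\sigma')$, whereas the paper unrolls the same decomposition letter by letter; your version is, if anything, slightly more rigorous.
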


\begin{proof}
Fix a sequence $\sigma = ( \, \sigma_1, \, \ldots, \, \sigma_n \,)$ in $\mathcal{S}_n^{\:\!m}(k_1, \ldots, k_m)\,$. Note that the inversions induced by all $\sigma_a$ for which $\sigma_a$ equals $m$ correspond to ordered pairs $(a,b)$ such that $\sigma_b$ is less than $m\,$. By Proposition \ref{prop:inv_binom}, it follows that ${n \choose k_m}_q$ corresponds precisely to inversions induced by all $\sigma_a$ equal to $m\,$.

Further observe that inversions induced by all $\sigma_a$ for which $\sigma_a$ equals $m-1$ correspond to ordered pairs $(a,b)$ such that $\sigma_b$ is less than $m-1\,$. In particular, no such $(a,b)$ will correspond to a $\sigma_b$ equal to $m\,$. As such, Proposition \ref{prop:inv_binom} applies to the subsequence of $\sigma$ containing the $n-k_m$ elements of $\sigma$ that do not equal $m\,$, and it follows that ${{n-k_m} \choose k_{m-1}}_q$ corresponds precisely to inversions induced by all $\sigma_a$ equal to $m-1\,$.

A similar argument holds for the remaining elements of $\sigma\,$.
\end{proof}

\subsection{Fundamental Sequences}
We will introduce an additional definition, inspired by \cite{Wilson} and named after P.A. MacMahon \cite{Mac}, that will be especially helpful in establishing the results of Section 3. 

\begin{definition}\label{def:Fund_Set}
Let $m,n$ be nonnegative integers. Define the \textbf{fundamental Mahonian set $\bm{\mathcal{F}_n^{\:\!m}}$} to be\[ \mathcal{F}_n^{\:\!m} \, \coloneqq \, \left\lbrace \, (F_1,\ldots,F_m) \,\; \big\vert \;\, |F_1|+\cdots+|F_m|=n \;\; \emph{and} \;\; \forall \, x \in F_{j+1}\, , \; x \in [K_j] \, \right\rbrace \, , \]where each $F_i$ is a (possibly empty) multiset of nonnegative integers and $K_j$ is equal to $|F_1|+\cdots+|F_j|\,$. Elements within $\mathcal{F}_n^{\:\!m}$ shall be referred to as \textbf{fundamental sequences}.
\end{definition}

Figure \ref{fig:fund_seq} contains some examples. Observe the parallelism between Figures \ref{fig:sequences} and \ref{fig:fund_seq}.

\begin{figure}[!htb]
\begin{center}
$\begin{array}{ccccc}
\big( \, \{0,0\} \, , \, \{2,2\} \, \big) && \big( \, \{0,0\} \, , \, \{2,1\} \, \big) && \big( \, \{0,0\} \, , \, \{2,0\} \, \big) \\[16pt]
\big( \, \{0,0\} \, , \, \{1,1\} \, \big) && \big( \, \{0,0\} \, , \, \{1,0\} \, \big) && \big( \, \{0,0\} \, , \, \{0,0\} \, \big)
\end{array}$
\end{center}
\caption{\label{fig:fund_seq} Fundamental sequences in $\mathcal{F}_4^{\:\! 2}$ for which $|F_2|=2$ and $|F_1|=2\,$.}
\end{figure}

\begin{proposition}\label{prop:fundset_bij}
If $m,n$ are nonnegative integers, then \[ \left| \, \mathcal{S}_n^{\:\! m} \, \right| \, = \, \left| \, \mathcal{F}_n^{\:\! m} \, \right| \, . \]
\end{proposition}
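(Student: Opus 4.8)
The plan is to prove the equality of cardinalities by partitioning both sets according to a common index --- the tuple of part sizes $(k_1, \ldots, k_m)$ with $k_1 + \cdots + k_m = n$ --- and showing the corresponding blocks have equal size. On the sequence side, the block indexed by $(k_1, \ldots, k_m)$ is exactly $\mathcal{S}_n^{\:\!m}(k_1, \ldots, k_m)$, and its cardinality is the multinomial coefficient $\binom{n}{k_1, \ldots, k_m}$ (equivalently, the $q \to 1$ specialization of Proposition \ref{prop:inv_multinom}). On the fundamental-sequence side, the block is $\{(F_1, \ldots, F_m) \in \mathcal{F}_n^{\:\!m} : |F_i| = k_i\}$, and the goal is to show this block also has size $\binom{n}{k_1, \ldots, k_m}$. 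Summing over all compositions then yields the result (and incidentally both totals equal $m^n$).

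First I would count the fundamental-sequence blocks. The crucial observation is that the admissibility constraint on $F_i$ --- namely that its entries lie in $\{0, 1, \ldots, K_{i-1}\}$ --- depends only on $K_{i-1} = k_1 + \cdots + k_{i-1}$, i.e.\ only on the sizes of the earlier parts and not on their actual contents. Hence, once the sizes $k_1, \ldots, k_m$ are fixed, the choices of $F_1, \ldots, F_m$ become independent, and the count factors as a product. Each $F_i$ is an arbitrary size-$k_i$ multiset drawn from the $(K_{i-1}+1)$-element ground set $\{0, 1, \ldots, K_{i-1}\}$, so a stars-and-bars count gives $\binom{(K_{i-1}+1) + k_i - 1}{k_i} = \binom{K_i}{k_i}$ choices (using $K_{i-1} + k_i = K_i$).

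Then I would evaluate the product. Writing $K_i = k_1 + \cdots + k_i$ with $K_0 = 0$ and $K_m = n$, the block size is
\[ \prod_{i=1}^m \binom{K_i}{k_i} \, = \, \prod_{i=1}^m \frac{K_i!}{k_i!\,K_{i-1}!} \, = \, \frac{K_m!}{K_0!\,\prod_{i=1}^m k_i!} \, = \, \frac{n!}{k_1!\cdots k_m!} \, = \, \binom{n}{k_1, \ldots, k_m}, \]
the product telescoping because the $K_{i-1}!$ in each denominator cancels the $K_{i-1}!$ from the previous numerator. (One may alternatively note that this product is literally the $q \to 1$ value of the multinomial in its defining factorization, read in reverse.) Either way it matches $|\mathcal{S}_n^{\:\!m}(k_1, \ldots, k_m)|$ exactly, so the two families of blocks are in size-for-size correspondence.

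The steps are individually routine; the part demanding the most care is the multiset count, specifically recognizing that the ground set for $F_i$ has $K_{i-1}+1$ elements (the inclusion of $0$, visible in the examples where $F_1 = \{0, \ldots, 0\}$), which is what makes the stars-and-bars count collapse to $\binom{K_i}{k_i}$ and drives the telescoping. An alternative to this counting argument would be to construct an explicit, indeed inversion-preserving, bijection between $\mathcal{S}_n^{\:\!m}(k_1,\ldots,k_m)$ and the corresponding block of $\mathcal{F}_n^{\:\!m}$ in the spirit of an inversion-table encoding, but the block-counting route is the most direct for the cardinality statement.
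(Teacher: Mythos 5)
Your proof is correct, but it takes a genuinely different route from the paper. The paper constructs an explicit bijection $\varphi \colon \mathcal{S}_n^{\:\! m} \rightarrow \mathcal{F}_n^{\:\! m}$ sending $\sigma$ to the tuple of multisets $F_j = \{\, \mathrm{i}(\sigma_a) \mid \sigma_a = j \,\}$ of induced inversion counts --- precisely the inversion-table encoding you mention in passing at the end --- and verifies well-definedness, injectivity, and surjectivity directly. Your block-counting argument instead fixes the composition $(k_1,\ldots,k_m)$, observes that the admissibility constraint on $F_{j+1}$ depends only on $K_j$, counts each $F_{j+1}$ by stars and bars as $\binom{K_{j+1}}{k_{j+1}}$ choices, and telescopes the product to $\binom{n}{k_1,\ldots,k_m}$; this is clean and correct (including your careful reading that the ground set for $F_{j+1}$ is $\{0,1,\ldots,K_j\}$, of size $K_j+1$, which forces $F_1$ to be all zeros and is the reading the paper's examples demand despite the literal notation $[K_j]$). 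The trade-off is that the paper's bijection is strictly more informative: it is immediately reused in Corollary \ref{cor:F_n^m(k)} to get the refined statement $|\mathcal{F}_n^{\:\! m}(k)| = |\{\sigma \in \mathcal{S}_n^{\:\! m} \mid \mathrm{inv}(\sigma)=k\}|$ by restriction, whereas your count establishes only the unrefined cardinality and would need to be upgraded to the explicit bijection anyway to serve the later results. As a proof of the proposition as stated, yours stands on its own and is arguably the more elementary argument.
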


\begin{proof}
Define the function $\varphi \colon \mathcal{S}_n^{\:\! m} \rightarrow \mathcal{F}_n^{\:\! m}$ by the assignment $\sigma \mapsto (F_1, \ldots,F_m)\,$, where each $F_j$ is the (possibly empty) multiset $\{ \, \text{i}\left(\sigma_a\right) \, \mid \, a \in [n] \;\; \text{and} \;\; \sigma_a = j \, \}\,$.

We will first show that $\varphi$ maps $\mathcal{S}_n^{\:\! m}$ into $\mathcal{F}_n^{\:\! m}\,$. Since each $\sigma$ in $\mathcal{S}_n^{\:\! m}$ contains $n$ elements, it follows that $|F_1|+\cdots+|F_m|$ is equal to $n\,$. Also observe that if $\sigma_a$ is equal to $j\,$, then the value of $\text{i}(\sigma_a)$ is at most $| \, \{ \, b \in [n] \, \mid \, \sigma_b < \sigma_a \, \} \, |\,$.

We will now show injectivity. Assume $\sigma^1,\sigma^2$ are sequences in $\mathcal{S}_n^{\:\! m}$ such that $\varphi(\sigma^1)$ and $\varphi(\sigma^2)$ are both equal to $(F_1, \ldots, F_m)\,$. Observe that the elements of $F_m$ force the set $\{ \, a \in [n] \, \mid \, \sigma_a^i \, = \, m \, \}$ to be equal for $i=1,2\,$. Subsequently observe that the elements of $F_{m-1}$ force the sets $\{ \, a \in [n] \, \mid \, \sigma_a^i \, = \, m-1 \, \}$ to be equal for $i=1,2\,$, and so on. Hence, $\varphi$ is injective.

To show surjectivity, fix a fundamental sequence $(F_1,\ldots, F_m)$ in $\mathcal{F}_n^{\:\!m}\,$. We will construct a sequence $\sigma$ in $\mathcal{S}_n^{\:\! m}$ that maps to $(F_1,\ldots,F_m)$ via $\varphi\,$. Let $f_1^{\:\!m}\, \leq \, \ldots\,\leq\, f_{|F_m|}^{\:\! m}$ be the elements of $F_m\,$, and let $\sigma^0$ be the sequence in $\mathcal{S}_n^{\:\! m}$ containing only $1$'s. Find an $a \in [n]$ such that the cardinality of $\{ \, b \in [n] \,\; \vert \;\, b>a \; \text{ and } \; \sigma_b^0=1 \, \}$ equals $f_1^{\:\!m}\,$. Note, such an $a$ exists because of the restriction on the elements of $F_m$ in the definition of $\mathcal{F}_n^{\:\! m}\,$. Replace $\sigma_a^0$ with $m\,$, and call this new sequence $\sigma^1\,$. Next, find an $a \in [n]$ such that $\sigma_a^1$ is $1$ and the cardinality of $\{ \, b \in [n] \,\; \vert \;\, b>a \; \text{ and } \; \sigma_b^1=1 \, \}$ equals $f_2^{\:\!m}\,$. Replace $\sigma_a^1$ with $m\,$, and call this new sequence $\sigma^2\,$. Continuing similarly for the remaining elements of $F_m$ and for the elements of $F_{m-1}, \ldots,F_2$ respectively, observe that the multiset $\{ \, \text{i}(\sigma_a^{n-|F_1|}) \,\; \vert \;\, a \in [n] \, \text{ and } \, \sigma_a^{n-|F_1|}=j \, \}$ is equal to $F_j$ for all $j$ in $[m]\,$. As such, $\sigma^{n-|F_1|}$ is our desired $\sigma\,$.
\end{proof}
$\;$\\[-1.5em]

The following definition and corollary can help us clarify the result of Proposition \ref{prop:fundset_bij}.

\begin{definition}\label{def:F_n^m(k)}
If $m,n,k$ are nonnegative integers, define $\bm{\mathcal{F}_n^m(k)}$ in the following manner:
\[ \mathcal{F}_n^{\:\! m}(k) \, \coloneqq \, \left\lbrace \, (F_1,\ldots,F_m) \in \mathcal{F}_n^{\:\!m} \,\; \bigg\vert \;\, \underset{i\:\!=\:\!1}{\overset{m}{\textstyle \sum}} \, \underset{x \in F_i}{\textstyle \sum} x \, = \, k \, \right\rbrace \, .  \]
\end{definition}

Figure \ref{fig:F_n^m} contains some examples.

\begin{figure}[!htb]
\begin{center}
$\begin{array}{ccccc}
( \, \{0,0\} , \{2\} , \emptyset \,) \hspace{0.035in} & \hspace{0.035in} ( \,\{0,0\} , \emptyset  ,  \{2\} \, ) \hspace{0.035in} & \hspace{0.035in} (\,  \{0\} ,  \{1,1\}  , \emptyset \, ) \hspace{0.035in} & \hspace{0.035in} (\, \{0\} ,  \{1\} ,  \{1\} \, ) \hspace{0.035in} & \hspace{0.035in} ( \, \{0\} , \emptyset  , \{1,1\} \, )
\end{array}$
\end{center}
\caption{\label{fig:F_n^m} Fundamental sequences in $\mathcal{F}_3^{\:\!3}(2)$ for which $0 \not\in F_2$ and $0 \not\in F_3\,$.}
\end{figure}

\begin{corollary}\label{cor:F_n^m(k)}
If $m,n,k$ are nonnegative integers, then \[ \left| \, \mathcal{F}_n^{\:\! m}(k) \, \right| \, =\, \left| \, \{ \, \sigma \in \mathcal{S}_n^{\:\! m} \, \mid \, \emph{inv}(\sigma)=k \, \}\,\right| \, . \]
\end{corollary}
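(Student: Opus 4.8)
The plan is to observe that the bijection $\varphi \colon \mathcal{S}_n^{\:\!m} \rightarrow \mathcal{F}_n^{\:\!m}$ constructed in Proposition \ref{prop:fundset_bij} already does all the work, and that this corollary is simply the statement that $\varphi$ respects the two statistics in play. First I would recall that $\varphi$ sends $\sigma$ to $(F_1,\ldots,F_m)$, where $F_j = \{ \, \text{i}(\sigma_a) \mid a \in [n] \text{ and } \sigma_a = j \, \}$. The crucial point is that the defining condition of $\mathcal{F}_n^{\:\!m}(k)$, namely $\sum_{i=1}^m \sum_{x \in F_i} x = k$, translates under $\varphi$ into a condition on $\text{inv}(\sigma)$.

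Next I would verify, for any $\sigma$ with $\varphi(\sigma) = (F_1,\ldots,F_m)$, the statistic-matching identity
\[ \sum_{i=1}^m \sum_{x \in F_i} x \, = \, \sum_{a \in [n]} \text{i}(\sigma_a) \, . \]
This holds because the multisets $F_1, \ldots, F_m$ are, by the very definition of $\varphi$, obtained by partitioning the collection of values $\{\text{i}(\sigma_a) \mid a \in [n]\}$ according to the value of $\sigma_a$; hence summing every element across all the multisets is the same as summing the induced-inversion counts over all indices. By Proposition \ref{prop:received-induced}, the right-hand side is exactly $\text{inv}(\sigma)$, so the total sum associated to $\varphi(\sigma)$ equals $\text{inv}(\sigma)$.

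Consequently $\varphi(\sigma) \in \mathcal{F}_n^{\:\!m}(k)$ if and only if $\text{inv}(\sigma) = k$, so $\varphi$ restricts to a map from $\{\sigma \in \mathcal{S}_n^{\:\!m} \mid \text{inv}(\sigma) = k\}$ onto $\mathcal{F}_n^{\:\!m}(k)$. Since $\varphi$ is a bijection by Proposition \ref{prop:fundset_bij}, its restriction to this fiber and its image is again a bijection, and the claimed equality of cardinalities follows immediately.

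I expect essentially no obstacle here, since the heavy lifting---that $\varphi$ is a bijection---is already established in Proposition \ref{prop:fundset_bij}. The only step requiring any care is the statistic-matching identity above, and even that is routine once one notes that the elements of the $F_j$ are precisely the induced-inversion values $\text{i}(\sigma_a)$ regrouped by letter value; the identification with $\text{inv}(\sigma)$ is then just Proposition \ref{prop:received-induced}.
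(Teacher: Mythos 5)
Your proposal is correct and follows exactly the paper's route: the paper's proof is the one-line instruction to restrict the bijection $\varphi$ of Proposition \ref{prop:fundset_bij} to $\{\sigma \in \mathcal{S}_n^{\:\!m} \mid \text{inv}(\sigma)=k\}$, and your statistic-matching identity $\sum_i \sum_{x \in F_i} x = \sum_a \text{i}(\sigma_a) = \text{inv}(\sigma)$ is precisely the (implicit) justification that this restriction lands bijectively onto $\mathcal{F}_n^{\:\!m}(k)$. You have simply made explicit what the paper leaves to the reader.
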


\begin{proof}
Restrict $\varphi$ from Proposition \ref{prop:fundset_bij} so its domain is $\{ \, \sigma \in \mathcal{S}_n^{\:\! m} \, \mid \, \text{inv}(\sigma)=k \, \}\,$.
\end{proof}

\subsection{Integer Partitions with Kinds}

Rather than using the conventional definition of integer partitions as finite series of nonincreasing positive integers, we have chosen a logically-equivalent and widely-utilized alternative which generalizes nicely to introduce integer partitions with kinds \cite{Gupta}.

\begin{definition}\label{def:partitions_kinds}
Let $k$ be a nonnegative integer. An \textbf{integer partition of $\bm{k}$} is a multiset of positive integers whose elements add to $k\,$. Let $m$ also be a nonnegative integer. An \textbf{integer partition of $\bm{k}$ with $\bm{m}$ kinds} is\[ ( \, P_1 \, , \, \ldots \, , \, P_m \, ) \;\, \text{ such that } \;\, \underset{i\:\!=\:\!1}{\overset{m}{\textstyle \sum}} \, \underset{x \in P_i}{\textstyle \sum} x \, = \, k \, , \] where each $P_i$ is a (possibly empty) multiset of positive integers. The set of all integer partitions of $k$ with $m$ kinds shall be denoted $\bm{\mathcal{P}_k^{\:\! m}}\,$.
\end{definition}

Figure \ref{fig:part_kinds} contains some examples. Observe the parallelism between Figures \ref{fig:F_n^m} and \ref{fig:part_kinds}.

\begin{figure}[!htb]
\begin{center}
$\begin{array}{ccccccccc}
(\, \{2\} , \emptyset \,) && ( \,\emptyset  ,  \{2\} \, ) && (\, \{1,1\}  , \emptyset \, ) && ( \,\{1\} ,  \{1\} \, ) &&  (\, \emptyset  , \{1,1\} \, )
\end{array}$
\end{center}
\caption{\label{fig:part_kinds} The integer partitions of $2$ with $2$ kinds.}
\end{figure}

\begin{proposition}\label{prop:kinds_fundamental}
Let $m,n$ be positive integers and $k$ be a nonnegative integer such that $k < n\,$. Then \[ \left| \, \mathcal{P}_k^{\:\! m} \, \right| \, = \, \left| \, \{ \, (F_1,\ldots,F_{m+1}) \in \mathcal{F}_n^{\:\! m+1}(k) \, \mid \, 0\not\in F_2\, ,\ldots, \, 0 \not\in F_{m+1} \, \} \, \right| \, .  \]
\end{proposition}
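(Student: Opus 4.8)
The plan is to build an explicit bijection between $\mathcal{P}_k^{\:\!m}$ and the right-hand set by simply discarding (respectively reinstating) the first coordinate of a fundamental sequence. The structural fact driving everything is that in any $(F_1,\ldots,F_{m+1}) \in \mathcal{F}_n^{\:\!m+1}$ the block $F_1$ can only contain zeros --- just as in Figures~\ref{fig:fund_seq} and~\ref{fig:F_n^m} --- so it contributes nothing to the element-sum and is completely determined by its size $|F_1|$. The hypotheses $0 \notin F_2,\ldots,0 \notin F_{m+1}$ say exactly that $F_2,\ldots,F_{m+1}$ are multisets of \emph{positive} integers, which is precisely the shape of a partition of $k$ with $m$ kinds.

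First I would define the forward map $\psi(F_1,\ldots,F_{m+1}) = (F_2,\ldots,F_{m+1})$ and check it lands in $\mathcal{P}_k^{\:\!m}$: each $F_{j+1}$ (for $j \ge 1$) is a multiset of positive integers by the zero-avoidance hypothesis, and since $F_1$ is all zeros, the defining condition $\sum_{i}\sum_{x\in F_i} x = k$ of $\mathcal{F}_n^{\:\!m+1}(k)$ reduces to $\sum_{i=2}^{m+1}\sum_{x\in F_i} x = k$, as required.

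Next I would give the candidate inverse: from $(P_1,\ldots,P_m) \in \mathcal{P}_k^{\:\!m}$ set $F_{j+1} \coloneqq P_j$ for $j \in [m]$ and let $F_1$ be the multiset consisting of $n - \sum_{i=1}^m |P_i|$ zeros. The conditions $\sum_i |F_i| = n$, the element-sum $=k$, and $0 \notin F_{j+1}$ are immediate, and $|F_1| \ge 0$ follows from $\sum_i |P_i| \le k < n$ (each part is at least $1$). The genuine content is verifying membership in $\mathcal{F}_n^{\:\!m+1}$, i.e.\ that every element of $F_{j+1}=P_j$ is at most $K_j = |F_1| + |P_1| + \cdots + |P_{j-1}| = n - \big(|P_j| + \cdots + |P_m|\big)$.

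This inequality is the main obstacle, and the only place $k < n$ enters. I would bound the largest element of a nonempty $P_j$ by $s_j - |P_j| + 1$, where $s_j \coloneqq \sum_{x \in P_j} x$ (the remaining $|P_j|-1$ parts each contribute at least $1$ to $s_j$), reducing the target to $s_j + |P_{j+1}| + \cdots + |P_m| + 1 \le n$. Since $|P_{j+1}| + \cdots + |P_m| \le s_{j+1} + \cdots + s_m$ and $s_1 + \cdots + s_m = k$, the left-hand side is at most $k+1 \le n$. Finally I would observe that $\psi$ and this construction are mutually inverse: deleting and then reinstating the all-zero block $F_1$ (whose size is forced by $n - \sum_i |P_i|$) recovers the original tuple in each direction, so the two sets biject and hence have equal cardinality.
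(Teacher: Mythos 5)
Your argument is correct and is essentially the paper's own proof: both hinge on prepending/deleting an all-zero block $F_1$ of size $n-\sum_i|P_i|$ and on verifying the membership inequality for $\mathcal{F}_n^{\:\!m+1}$ by bounding the largest part of a block using the fact that the remaining parts each contribute at least $1$, with $k<n$ supplying the final step. The only cosmetic difference is that you check the sharper bound against $K_j$ blockwise while the paper checks the single bound against $K_1=|F_1|$ (which suffices since $K_1\leq K_j$); the content is the same.
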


\begin{proof}
Define $\varphi \colon \mathcal{P}_k^{\:\! m} \rightarrow \{ \, (F_1,\ldots,F_{m+1}) \in \mathcal{F}_n^{\:\! m+1}(k) \, \mid \, 0\not\in F_2\, ,\ldots, \, 0 \not\in F_{m+1} \, \}$ via \[ (P_1,\ldots,P_m) \: \mapsto \: (F_1,P_1,\ldots,P_m) \, , \] where $F_1$ is a multiset of cardinality $n-|P_1|-\cdots-|P_m|$ containing only zeros. Note that since each $P_i$ contains only positive integers, the value of $|P_1|+\cdots+|P_m|$ is at most $k$ and $|F_1|$ is positive.

We will now show that $\varphi$ maps into the codomain. Observe that: the elements of $(F_1,P_1,\ldots,P_m)$ are $m+1$ multisets; the sum $|F_1|+|P_1|+\cdots+|P_m|$ equals $n\,$; the elements of $P_1,\ldots,P_m$ are nonzero; and the elements of $F_1,P_1,\ldots,P_m$ add to $k\,$. It remains to show that $(F_1,P_1,\ldots,P_m)$ satisfies the restriction on elements from the definition of $\mathcal{F}_n^{\:\! m+1}(k)\,$. Let $p$ equal $|P_1|+\cdots+|P_m|\,$. Since $P_1 \cup \cdots \cup P_m$ contains $p$ positive integers that sum to $k\,$, observe any element of $P_i$ could be at most $k-(p-1)$ in value. Since $k$ is less than $n\,$, it follows that \[ k-(p-1) \, = \, (k+1)-p \, \leq \, n-p \: . \] As such, every element in $P_1 \cup \cdots \cup P_m$ is in $[K_1]\,$, where $K_1$ is equal to $|F_1|\,$.

Finally, observe that bijectivity of $\varphi$ follows naturally from its rule of assignment.
\end{proof}
$\;$\\[-1.5em]

It may be worth reflecting on the set $\{ \, \sigma \in \mathcal{S}_n^{\:\! m+1} \,\; \vert \;\, \text{inv}(\sigma)=k \, \text{ and } \, \sigma_n=1 \, \}$ within the context of Propositions \ref{prop:fundset_bij} and  \ref{prop:kinds_fundamental}, respectively.

\section{$q$-multinomial Identities}

This section will focus on developing $q$-analogs of both classical and lesser-known multinomial identities. Our motivation is to provide a uniform treatment of these $q$-analogs utilizing the intuitive nature of the inversion statistic. It is the viewpoint of the authors that this approach is not only concise but also enables a deep understanding.

%

\subsection{Symmetry}

We will begin with the well-known $q$-analog to binomial symmetry, namely ${n \choose k} =  {n \choose {n-k}}\,$.

\begin{proposition}\label{prop:binom-sym}
If $n,k$ are nonnegative integers such that $n \geq k\,$, then \[ {n \choose k}_q \, = \, {n \choose {n-k}}_q \, . \]
\end{proposition}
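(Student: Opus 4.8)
The plan is to read both sides of the identity through Proposition~\ref{prop:inv_binom}, which expresses $q$-binomial coefficients as inversion generating functions. This gives ${n \choose k}_q = \sum_{\sigma \in \mathcal{S}_n^{\:\!2}(k, n-k)} q^{\text{inv}(\sigma)}$ and, applying the same proposition with $k$ replaced by $n-k$, ${n \choose n-k}_q = \sum_{\tau \in \mathcal{S}_n^{\:\!2}(n-k, k)} q^{\text{inv}(\tau)}$. It therefore suffices to construct an inversion-preserving bijection between $\mathcal{S}_n^{\:\!2}(k, n-k)$ and $\mathcal{S}_n^{\:\!2}(n-k, k)$, since any such bijection forces the two polynomials to agree term by term.

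First I would examine the two natural involutions on binary sequences. Reversal sends $\sigma = ( \, \sigma_1, \, \ldots, \, \sigma_n \,)$ to $( \, \sigma_n, \, \ldots, \, \sigma_1 \,)$ and preserves the multiset of entries, so it maps $\mathcal{S}_n^{\:\!2}(k, n-k)$ to itself; complementation sends $\sigma$ to $( \, 3-\sigma_1, \, \ldots, \, 3-\sigma_n \,)$ and swaps the roles of the symbols $1$ and $2$, so it maps $\mathcal{S}_n^{\:\!2}(k, n-k)$ to $\mathcal{S}_n^{\:\!2}(n-k, k)$. Neither preserves inversions on its own, but I claim each replaces $\text{inv}(\sigma)$ by $k(n-k) - \text{inv}(\sigma)$. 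Consequently their composite, the reverse-and-complement map defined by $\tau_a = 3 - \sigma_{n+1-a}$, preserves inversions and carries $\mathcal{S}_n^{\:\!2}(k, n-k)$ onto $\mathcal{S}_n^{\:\!2}(n-k, k)$. A short computation shows this composite is its own inverse, namely $3 - \tau_{n+1-a} = \sigma_a$, so it is automatically a bijection and no separate injectivity or surjectivity check is required.

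The one step needing genuine verification is the claim that reversal, and likewise complementation, turns $\text{inv}(\sigma)$ into $k(n-k) - \text{inv}(\sigma)$. This is where the binary setting keeps things clean: the only pairs $(a,b)$ with $a < b$ that can be counted by $\text{inv}(\sigma)$ are the ``mixed'' pairs consisting of one $1$ and one $2$, and there are exactly $k(n-k)$ of them. A mixed pair contributes to $\text{inv}(\sigma)$ precisely when the larger entry $2$ precedes the smaller entry $1$. Reversal sends each such pair to one in which $1$ precedes $2$ and conversely, so it exchanges the inversions with the non-inversions among these $k(n-k)$ pairs; the identical accounting applies to complementation. Chaining the two identities yields $\text{inv}(\tau) = k(n-k) - \bigl( k(n-k) - \text{inv}(\sigma) \bigr) = \text{inv}(\sigma)$.

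The main obstacle I anticipate is simply pinning down that ``complementary inversion'' count $k(n-k)$ and confirming that the composite map lands in the correct target set; both amount to careful bookkeeping of entry counts, which reversal fixes and complementation swaps. Once these are in place, summing $q^{\text{inv}}$ over the bijection gives ${n \choose k}_q = {n \choose n-k}_q$ directly.
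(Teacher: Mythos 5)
Your proposal is correct and uses the same bijection as the paper's proof: the reverse-and-complement map $\sigma \mapsto (\,\overline{\sigma_n},\ldots,\overline{\sigma_1}\,)$, i.e.\ $\tau_a = 3-\sigma_{n+1-a}$. The only difference is how inversion-preservation is verified --- the paper matches each induced inversion count $\text{i}(\sigma_a)$ with the received inversion count $\text{r}(\overline{\sigma_a})$ and invokes Proposition~\ref{prop:received-induced}, whereas you factor the map into reversal and complementation, each sending $\text{inv}(\sigma)$ to $k(n-k)-\text{inv}(\sigma)$; both verifications are sound.
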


\begin{proof}
Let $\mathcal{S}_n^{\:\! 2}(k,n-k)$ be the set of sequences of length $n$ whose elements are in $[2]$ with $k$ $2$'s, and refer to an arbitrary sequence in $\mathcal{S}_n^{\:\! 2}(k,n-k)$ by $\sigma= ( \, \sigma_1, \, \ldots, \, \sigma_n \,)\,$. For every $x \in [2]\,$, say that $\overline{x}$ equals $1$ when $x$ is $2$ and $\overline{x}$ equals $2$ when $x$ is $1\,$. Define a map \[ \varphi \colon \mathcal{S}_n^{\:\! 2}(k,n-k) \rightarrow \mathcal{S}_n^{\:\! 2}(n-k,k) \;\; \emph{by} \;\; ( \, \sigma_1, \, \ldots, \, \sigma_n \,) \mapsto ( \, \overline{\sigma_n}, \, \ldots, \, \overline{\sigma_1} \,) \, . \] Fix some $a \in [n]$ and consider $\sigma_a\,$. If $\sigma_a$ is 2 and i$\left(\sigma_a\right)$ is $c\,$, then the number of 1's that follow $\sigma_a$ in $\sigma$ must be $c\,$. By the definition of $\varphi\,$, notice the number of 2's preceding $\overline{\sigma_a}$ in $\varphi\left(\sigma\right)$ is also $c\,$. Hence, the numbers i$\left(\sigma_a\right)$ and r$\left(\overline{\sigma_a}\right)$ are equal. Should $\sigma_a$ be $1\,$, observe that i$\left(\sigma_a\right)$ and r$\left(\overline{\sigma_a}\right)$ are both zero. Further observing that $\varphi$ is bijective, the desired result follows from Proposition \ref{prop:received-induced}.
\end{proof}
$\;$\\[-1.5em]

We will now generalize to the well-known $q$-analog of multinomial symmetry.

\begin{proposition}\label{prop:multinom-sym}
If $m,n,k_1,\ldots,k_m$ are nonnegative integers such that $k_1+\cdots + k_m=n$ and $\pi$ is a permutation of $[m]\,$, then
\[ {n \choose {k_1, \, \ldots, \, k_m}}_q \, = \, {n \choose {k_{\pi(1)}, \, \ldots, \, k_{\pi(m)}}}_q \, . \]
\end{proposition}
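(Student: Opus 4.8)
The plan is to combine the inversion model of Proposition \ref{prop:inv_multinom} with the binary symmetry of Proposition \ref{prop:binom-sym}, reducing an arbitrary permutation to a product of transpositions of consecutive entries. Every permutation of $[m]$ is a composition of adjacent transpositions $(j\;j+1)$, and composing the corresponding swaps on the parameter list $(k_1,\ldots,k_m)$ produces an arbitrary reordering $(k_{\pi(1)},\ldots,k_{\pi(m)})$; hence it suffices to prove the identity in the case where $\pi$ interchanges only two \emph{consecutive} parameters $k_j$ and $k_{j+1}$.

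By Proposition \ref{prop:inv_multinom}, each side of this reduced identity is the generating function $\sum_\sigma q^{\text{inv}(\sigma)}$ over the relevant set of sequences, so I would show that the two multisets of inversion numbers agree. To do so, I would partition $\mathcal{S}_n^{\:\!m}(\ldots,k_j,k_{j+1},\ldots)$ according to its \emph{skeleton}: the set $S\subseteq[n]$ of positions holding a value in $\{j,j+1\}$, together with the arrangement of all other symbols on $[n]\setminus S$. For a fixed skeleton, I would split $\text{inv}(\sigma)$ into three parts, namely inversions internal to $[n]\setminus S$, inversions between $S$ and $[n]\setminus S$, and inversions internal to $S$.

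The crux, and the step I expect to require the most care, is the observation that because $j$ and $j+1$ are consecutive, every symbol $c\notin\{j,j+1\}$ lies entirely on one side: either $c<j$ or $c>j+1$. Therefore, when an $S$-position is compared against a non-$S$-position, the inversion status is the same whether the $S$-value equals $j$ or $j+1$. Consequently the first two groups of inversions contribute a quantity $c(S)$ that is independent of how $j$ and $j+1$ are distributed across $S$, and in particular is identical for the sequence set with content $(k_j,k_{j+1})$ and the one with content $(k_{j+1},k_j)$. The third group is precisely the inversion number of the binary word carried by $S$.

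Summing over a fixed skeleton then yields $q^{c(S)}$ times the $\text{inv}$-generating function of binary words of length $|S|$ with the prescribed content; by Proposition \ref{prop:inv_binom} this is a $q$-binomial coefficient, and by Proposition \ref{prop:binom-sym} (applied to these length-$|S|$ binary words, for which $k_j+k_{j+1}=|S|$) it is unchanged when the two letter-counts are interchanged. Thus each skeleton contributes equally to both sides; since the skeletons correspond bijectively between the two sequence sets, summing over all skeletons establishes the reduced identity, and composing adjacent transpositions yields the full statement. Apart from the consecutiveness argument, the remaining steps are routine bookkeeping.
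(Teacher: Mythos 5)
Your proposal is correct and follows essentially the same route as the paper: reduce to an adjacent transposition, observe that symbols other than $j$ and $j+1$ contribute identically to inversions regardless of which of the two values sits in a given position, and invoke the binary symmetry of Proposition \ref{prop:binom-sym} on the sub-word of $j$'s and $(j+1)$'s. The paper packages this as an explicit skeleton-preserving bijection $\theta$ (acting via the reverse--complement map $\varphi$ on that sub-word), whereas you sum generating functions over fixed skeletons, but the underlying argument is the same.
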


\begin{proof}
Refer to an arbitrary sequence in $\mathcal{S}_n^{\:\! m}(k_1, \ldots, k_m)$ by $\sigma= ( \, \sigma_1, \, \ldots, \, \sigma_n \,)\,$, and define a map
\[ \theta\colon \mathcal{S}_n^{\:\! m}(k_1,\ldots,k_i,k_{i+1},\ldots,k_m) \rightarrow \mathcal{S}_n^{\:\! m} (k_1,\ldots,k_{i+1},k_i,\ldots,k_m) \] such that $\theta(\sigma)_a$ equals $\sigma_a$ when $\sigma_a$ is neither $i$ nor $i+1\,$. It follows that
\begin{align*}
\displaystyle\sum_{\sigma_a > i+1} \text{i}(\sigma_a) \, &= \, \displaystyle\sum_{\theta(\sigma)_a > i+1} \text{i}\left(\theta(\sigma)_a\right) \, , & \displaystyle\sum_{\sigma_a < i} \text{i}(\sigma_a) \, &= \, \displaystyle\sum_{\theta(\sigma)_a < i} \text{i}\left(\theta(\sigma)_a\right) \, .
\end{align*}
For the subsequence of $\sigma$ for which $\sigma_a$ is equal to $i$ or $i+1\,$, let $\theta$ act on that subsequence analogously to $\varphi$ in Proposition \ref{prop:binom-sym}.  It follows that \[ \displaystyle\sum_{\sigma_a \in \{i,i+1\}} \text{i}(\sigma_a) \, = \, \displaystyle\sum_{\theta(\sigma)_a \in \{i,i+1\}} \text{i}\left(\theta(\sigma)_a\right) \, . \]
By Proposition \ref{prop:received-induced}, we have that inv($\sigma$) equals inv($\theta(\sigma)$)\,.

Observe that this Proposition has been shown for $\pi$ that are of the form of a simple transposition. Given that any permutation is a composition of simple transpositions, we have our desired result for any permutation $\pi\,$.
\end{proof}
$\;$\\[-1.5em]

\subsection{Pascal's Identity}

We will now consider Pascal's Identity in order to develop a well-known $q$-analog,
\[ {n \choose {k_1,\ldots,k_m}} \, = \, {{n-1} \choose {k_1-1,\ldots, k_m}} + {{n-1} \choose {k_1, k_2-1,\ldots, k_m}} + \cdots + {{n-1} \choose {k_1,\ldots,k_{m}-1}} \,. \]
Interpreting ${n \choose {k_1,\ldots,k_m}}$ as the number of sequences in $\mathcal{S}_n^{\:\! m}(k_1, \ldots, k_m)\,$, then ${{n-1} \choose {k_1-1,\ldots,k_m}}$ counts such sequences that end in a $1\,$, ${{n-1} \choose {k_1,k_2-1,\ldots,k_m}}$ counts such sequences that end in a $2\,$, and so on.

\begin{proposition}\label{prop:pascal-multinom}
If $m,n,k_1,\ldots,k_m$ are nonnegative integers such that $k_1+\cdots + k_m=n\,$, then ${n \choose {k_1,\ldots,k_m}}_q$ is equal to
\[ q^{k_2+\cdots+k_m}{{n-1} \choose {k_1-1,\ldots, k_m}}_q + q^{k_3+\cdots+k_m}{{n-1} \choose {k_1, k_2-1,\ldots, k_m}}_q + \cdots + {{n-1} \choose {k_1,\ldots,k_{m}-1}}_q \,. \]
\end{proposition}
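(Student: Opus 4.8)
The plan is to evaluate the left-hand side via the inversion generating function of Proposition \ref{prop:inv_multinom}, rewriting ${n \choose k_1,\ldots,k_m}_q$ as $\sum_\sigma q^{\text{inv}(\sigma)}$ over all $\sigma \in \mathcal{S}_n^{\:\! m}(k_1,\ldots,k_m)$, and then partition this sum according to the value of the final entry $\sigma_n$. As the discussion preceding the statement suggests, for each $j \in [m]$ the sequences with $\sigma_n = j$ should account for exactly one summand on the right, with the power of $q$ emerging naturally from the inversion statistic.

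First I would fix $j \in [m]$ and consider the deletion map $\sigma = (\sigma_1,\ldots,\sigma_n) \mapsto \sigma' = (\sigma_1,\ldots,\sigma_{n-1})$. Restricted to the block $\{\sigma \in \mathcal{S}_n^{\:\! m}(k_1,\ldots,k_m) : \sigma_n = j\}$, this is a bijection onto $\mathcal{S}_{n-1}^{\:\! m}(k_1,\ldots,k_j-1,\ldots,k_m)$, since appending a trailing $j$ is the unique inverse. The crux is then to track how $\text{inv}$ changes under this deletion. Because $\sigma_n$ is the last entry, every inversion of $\sigma$ is either a pair lying entirely among the first $n-1$ positions, and these are precisely the inversions of $\sigma'$, or a pair $(a,n)$ received by $\sigma_n$. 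By the received-inversion viewpoint of Proposition \ref{prop:received-induced}, the latter count is $\text{r}(\sigma_n)$, the number of positions $a < n$ with $\sigma_a > j$; since $\sigma_n = j$ is not greater than itself, this equals the total number of entries of $\sigma$ exceeding $j$, namely $k_{j+1}+\cdots+k_m$. Hence $\text{inv}(\sigma) = \text{inv}(\sigma') + (k_{j+1}+\cdots+k_m)$, a shift independent of the particular $\sigma$ in the block.

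Factoring $q^{k_{j+1}+\cdots+k_m}$ out of the block ending in $j$ and reapplying Proposition \ref{prop:inv_multinom} in length $n-1$ yields
\[ \sum_{\substack{\sigma_n = j}} q^{\text{inv}(\sigma)} \, = \, q^{\,k_{j+1}+\cdots+k_m} \, {n-1 \choose k_1,\ldots,k_j-1,\ldots,k_m}_q \, . \]
Summing this over $j = 1,\ldots,m$ reproduces the right-hand side exactly, the $j=1$ term carrying $q^{k_2+\cdots+k_m}$ and the $j=m$ term carrying the empty product $q^0 = 1$, in agreement with the stated expression.

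The main thing to get right is the bookkeeping of the inversion shift: confirming that deleting the trailing $j$ removes exactly the $k_{j+1}+\cdots+k_m$ inversions it receives while leaving all inversions among the earlier entries untouched. The received-inversion language of Proposition \ref{prop:received-induced} is what makes this transparent and is the reason I would route the argument through that proposition rather than through the subset definition. A minor point to handle is the convention that any summand with a negative lower index, which occurs precisely when some $k_j = 0$, is taken to be zero; this matches the fact that the block ending in $j$ is empty when there are no $j$'s available to place.
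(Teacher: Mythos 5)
Your proposal is correct and follows essentially the same route as the paper: both interpret ${n \choose k_1,\ldots,k_m}_q$ as the inversion generating function, partition the sequences by the value of the final entry, and factor out the $k_{j+1}+\cdots+k_m$ inversions received by that trailing $j$. Your version simply spells out the deletion bijection and the inversion bookkeeping that the paper's terser argument leaves implicit.
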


\begin{proof}
Interpret ${n \choose {k_1,\ldots,k_m}}_q$ as the generating function for inversions on $\mathcal{S}_n^{\:\! m}(k_1, \ldots, k_m)\,$. For such sequences that end in a $1\,$, note that $k_2+\cdots+k_m$ inversions will be received by that $1\,$. Thus, the product of $q^{k_2+\cdots+k_m}$ and ${{n-1} \choose {k_1-1,\ldots,k_m}}_q$ accounts precisely for the inversions of sequences that end in a $1\,$. The argument is similar for the remaining terms of our desired sum.
\end{proof}
$\;$\\[-1.5em]

Note that applying Proposition \ref{prop:multinom-sym} to Proposition \ref{prop:pascal-multinom} yields $m!$ different articulations of the $q$-analog to Pascal's Identity. For the case $m=2\,$, Figure \ref{fig:Pascal-binom} contains the resulting $2!$ articulations.

\begin{figure}[!htb]
\[ q^{k_2}{{n-1} \choose {k_1-1,k_2}}_q+{{n-1} \choose {k_1,k_2-1}}_q \hspace{.65in} {{n-1} \choose {k_1-1,k_2}}_q+q^{k_1}{{n-1} \choose {k_1,k_2-1}}_q \]
\caption{\label{fig:Pascal-binom} The two articulations of ${n \choose {k_1,k_2}}_q$ via the $q$-analog of Pascal's Identity.}
\end{figure}

\subsection{A Useful Lemma}

We will now introduce a lemma that will supply a framework of thinking to address a number of the Propositions that follow within this section.

\begin{lemma}\label{lem:chunk}
Let $m,n,t$ be nonnegative integers; let $p_1 \, , \ldots \, , \, p_t$ be nonnegative integers such that $p_1+\cdots+p_t = n\,$; for all $(i,j)$ in $[t]\times [m]\,$, let $\ell_{i,j}$ be nonnegative integers such that $\ell_{i,1}+\cdots+\ell_{i,m} = p_i \,$; for all $i$ in $[t]\,$, let $d_i$ equal $p_1+\cdots+p_i\,$; for any $\sigma$ in $\mathcal{S}_n^{\:\! m}$ and for all $u \in [t]\,$, let $s_u(\sigma)$ be $\left( \, \sigma_{d_{u-1}+1} \, , \, \ldots \, , \, \sigma_{d_u} \, \right)\,$; and let \[ T \, \coloneqq \, \left\lbrace \, \sigma \in \mathcal{S}_n^{\:\! m} \, \mid \, s_u(\sigma) \in S_{p_u}^{\:\! m}\left(\ell_{u,1} \, , \, \ldots \, , \, \ell_{u,m} \right)\, , \;\, \forall \: u \in [t] \, \right\rbrace\, . \]
Then,
\[ \sum_{\sigma \in T} q^{\emph{inv}(\sigma)} \: = \: {q{\Large \strut}}^{\Big(\, \underset{u=1}{\overset{t}{\scriptstyle \sum}} \, \underset{v=2}{\overset{m}{\scriptstyle \sum}} \, \ell_{u,v} \, \big( \, \underset{i=u+1}{\overset{t}{\scriptstyle \sum}} \, \underset{j=1}{\overset{v-1}{\scriptstyle \sum}} \, \ell_{i,j} \, \big) \, \Big) } { p_1 \choose {\ell_{1,1},\ldots,\ell_{1,m}}}_q \cdots {p_t \choose {\ell_{t,1},\ldots,\ell_{t,m}}}_q \, . \]
\end{lemma}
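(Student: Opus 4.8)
The plan is to split every $\text{inv}(\sigma)$ for $\sigma \in T$ into inversions occurring \emph{within} a single chunk $s_u(\sigma)$ and inversions occurring \emph{between} two distinct chunks, then to observe that the between-chunk contribution is a constant (independent of the choice of $\sigma \in T$) while the within-chunk contribution factors as a product over the chunks. This directly mirrors the way the displayed formula is a power of $q$ times a product of $q$-multinomial coefficients.

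First I would fix $\sigma \in T$ and partition the index set $[n]$ into the $t$ consecutive blocks $B_u = \{ d_{u-1}+1, \ldots, d_u \}$, so that $s_u(\sigma)$ is exactly the restriction of $\sigma$ to $B_u\,$. Any inverting pair $(a,b)$ counted by $\text{inv}(\sigma)$ has its two coordinates either in the same block or in two distinct blocks, and since inversions within a block are precisely those of the subsequence, we get $\text{inv}(\sigma) = \sum_{u=1}^{t} \text{inv}\left( s_u(\sigma) \right) + I_{\text{between}}(\sigma)\,$, where $I_{\text{between}}(\sigma)$ counts inverting pairs lying in distinct blocks.

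The key observation is that $I_{\text{between}}(\sigma)$ does not depend on the internal arrangement of any chunk. If $a \in B_u$ and $b \in B_i$ with $u < i\,$, then automatically $a < b$ because the blocks are consecutive; hence $(a,b)$ is an inversion precisely when $\sigma_a > \sigma_b\,$. Summing over all such pairs therefore records, for each ordered pair of blocks $u < i\,$, the number of (value in block $u$, value in block $i$) combinations in which the earlier value strictly exceeds the later one. Since block $u$ contains exactly $\ell_{u,v}$ entries equal to $v$ and block $i$ contains exactly $\ell_{i,j}$ entries equal to $j\,$, this count is $\sum_{v=2}^{m}\sum_{j=1}^{v-1} \ell_{u,v}\,\ell_{i,j}\,$, and summing over $u<i$ reproduces exactly the exponent $E$ appearing in the statement. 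Confirming that this quadruple sum matches the displayed exponent after reindexing is the main bookkeeping step, and essentially the only obstacle in the argument.

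Finally I would exploit the fact that the map $\sigma \mapsto \left( s_1(\sigma), \ldots, s_t(\sigma) \right)$ is a bijection from $T$ onto the Cartesian product $\mathcal{S}_{p_1}^{\:\!m}(\ell_{1,1},\ldots,\ell_{1,m}) \times \cdots \times \mathcal{S}_{p_t}^{\:\!m}(\ell_{t,1},\ldots,\ell_{t,m})\,$, since by definition of $T$ the blocks may be filled independently. Because $I_{\text{between}}$ equals the constant $E$ computed above and the within-chunk inversions add across the blocks, the generating function separates as
\[ \sum_{\sigma \in T} q^{\text{inv}(\sigma)} \, = \, q^{E} \prod_{u=1}^{t} \Bigg( \sum_{s_u \in \mathcal{S}_{p_u}^{\:\!m}(\ell_{u,1},\ldots,\ell_{u,m})} q^{\text{inv}(s_u)} \Bigg) \, . \]
Applying Proposition \ref{prop:inv_multinom} to each inner sum identifies it with ${p_u \choose {\ell_{u,1},\ldots,\ell_{u,m}}}_q\,$, which gives the claimed product and completes the proof.
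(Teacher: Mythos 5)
Your proposal is correct and follows essentially the same route as the paper: both decompose $\text{inv}(\sigma)$ into within-block inversions (yielding the product of $q$-multinomial coefficients via Proposition \ref{prop:inv_multinom}) and between-block inversions (shown to be the constant exponent by counting value pairs $\ell_{u,v}\ell_{i,j}$ with $u<i$ and $j<v$). The only cosmetic difference is that you make the factorization bijection $\sigma \mapsto (s_1(\sigma),\ldots,s_t(\sigma))$ explicit, which the paper leaves implicit.
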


\begin{proof}
Let $X_1$ be equal to $[d_1]\,$; for all $2\leq u\leq t\,$, let $X_u$ be equal to $[d_u]\setminus [d_{u-1}]\,$; and interpret the left-hand side of the equality in the lemma statement as the generating function for inversions on $T\,$.

Observe that for a sequence $\sigma$ in $T\,$, ordered pairs $(a,b)$ associated with $\text{inv}(\sigma)$ are of exactly one of the following forms: $a,b$ are both in the same $X_u\,$; and $a,b$ are not both in the same $X_u\,$. Note that for all $u$ in $[t]\,$, ${p_u \choose {\ell_{u,1},\ldots,\ell_{u,m}}}_q$ accounts for ordered pairs $(a,b)$ associated with inversions such that $a,b$ are both in $X_u\,$. It remains to consider ordered pairs $(a,b)$ associated with inversions such that $a,b$ are not both in the same $X_u\,$.

Fix some $u$ in $[t]$ and some $v$ in $[m]\setminus \{1\}\,$. Observe that $\ell_{u,v}$ is the number of elements $\sigma_a$ such that $a$ is in $X_u$ and $\sigma_a$ equals $v\,$. Furthermore, ${\scriptstyle \sum} \, {\scriptstyle \sum} \, \ell_{i,j}$ accounts for the number of elements $\sigma_b$ such that $b$ is greater than $d_u$ and $\sigma_b$ is less than $v\,$. Hence, $\ell_{u,v} \left( \, {\scriptstyle \sum} \, {\scriptstyle \sum} \, \ell_{i,j}  \, \right)$ accounts for ordered pairs $(a,b)$ associated with inversions where $a$ is in $X_u\,$, $\sigma_a$ equals $v\,$, and $b$ in not in $X_u\,$. Summing over all $\ell_{u,v}\,$, the factor of $q^{\scriptstyle \sum (\cdots)}$ accounts for ordered pairs $(a,b)$ associated with inversions such that $a,b$ are not both in the same $X_u\,$.
\end{proof}

\subsection{Diagonal Sum Identity}

We will now consider the Diagonal Sum Identity in order to develop a well-known $q$-analog,
\[ {n \choose {k_1,\ldots,k_m}} \, = \, \sum_{i=0}^{k_1} \, \sum_{j=2}^m {{n-i-1} \choose {k_1-i,k_2,\ldots,k_j-1,\ldots, k_m}}\, . \]
Interpreting ${n \choose {k_1,\ldots,k_m}}$ as the number of sequences in $\mathcal{S}_n^{\:\! m}(k_1,\ldots,k_m)\,$, then the expression ${{n-i-1} \choose {k_1-i,k_2,\ldots,k_j-1,\ldots,k_m}}$ counts such sequences that end in a $j$ followed by $i$ 1's.

\begin{proposition}\label{prop:diag}
If $m,n,k_1,\ldots,k_m$ are nonnegative integers such that $k_1+\cdots + k_m=n\,$, then \[ {n \choose {k_1,\ldots,k_m}}_q \, = \, \sum_{i=0}^{k_1} \, \sum_{j=2}^m \: q{\large \strut}^{\Big(\, (n-k_1)\:\!i+ \underset{v = j+1}{\overset{m}{\scriptstyle \sum}} k_v \, \Big)}  {{n-i-1} \choose {k_1-i,k_2,\ldots,k_j-1,\ldots, k_m}}_q\, . \]
\end{proposition}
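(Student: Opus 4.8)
The plan is to read both sides as inversion-generating functions and to realize the double sum as a partition of $\mathcal{S}_n^{\:\!m}(k_1,\ldots,k_m)$ into blocks. Following Proposition \ref{prop:inv_multinom}, I interpret ${n \choose k_1,\ldots,k_m}_q$ as $\sum_\sigma q^{\text{inv}(\sigma)}$ taken over $\mathcal{S}_n^{\:\!m}(k_1,\ldots,k_m)$. Every sequence other than the all-ones sequence has a well-defined trailing run of $1$'s, say of length $i$, immediately preceded by an entry equal to some $j \geq 2$; this pair $(i,j)$ is unique. Sorting the sequences by $(i,j)$ partitions the non-constant sequences into blocks indexed by $0 \leq i \leq k_1$ and $2 \leq j \leq m$, where the block $(i,j)$ consists of exactly those sequences whose last $i+1$ entries form the suffix $(j,1,\ldots,1)$ with $i$ trailing ones.

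Deleting this suffix gives a bijection between the block $(i,j)$ and $\mathcal{S}_{n-i-1}^{\:\!m}(k_1-i,k_2,\ldots,k_j-1,\ldots,k_m)$, with the prefix $\sigma'$ ranging freely over the latter set; when $k_j=0$ the block is empty and the corresponding $q$-multinomial is read as $0$ under the convention for negative entries. It then remains to compare $\text{inv}(\sigma)$ with $\text{inv}(\sigma')$. For this I would account for the new inversions through received inversions, in the spirit of Proposition \ref{prop:received-induced}: since the suffix sits at the very end, appending it destroys no inversion of $\sigma'$ and creates only inversions received by the appended entries, so $\text{inv}(\sigma)-\text{inv}(\sigma') = r(\sigma_{n-i}) + \sum_{b=n-i+1}^{n} r(\sigma_b)$, every term computed inside $\sigma$.

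The distinguished $j$ at position $n-i$ is preceded by every entry exceeding $j$, and all such entries (there are $\sum_{v=j+1}^m k_v$ of them) lie strictly before it, since the positions from $n-i$ onward hold only $j$ and $1$'s; hence $r(\sigma_{n-i}) = \sum_{v=j+1}^m k_v$. Each of the $i$ trailing $1$'s is preceded by all $n-k_1$ non-one entries of $\sigma$ (precisely the entries exceeding $1$, all of which occur at positions $\leq n-i$), so each receives $n-k_1$ inversions, contributing $i(n-k_1)$ in total. Summing gives $\text{inv}(\sigma)-\text{inv}(\sigma') = (n-k_1)i + \sum_{v=j+1}^m k_v$, which depends only on $(i,j)$. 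Factoring this constant power of $q$ out of the block sum then yields $\sum_{\sigma'} q^{\text{inv}(\sigma)} = q^{(n-k_1)i+\sum_{v=j+1}^m k_v}\,{n-i-1 \choose k_1-i,\ldots,k_j-1,\ldots,k_m}_q$, and summing over all blocks produces the asserted identity.

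The step I expect to be the main obstacle is the inversion bookkeeping of the third paragraph: one must verify that the number of extra inversions depends only on $(i,j)$ and not on the chosen prefix, which is exactly what allows the power of $q$ to pull out of the block sum. The received-inversions decomposition is what makes this transparent, reducing the count to two clean observations about which earlier entries exceed $j$ and which exceed $1$. A minor point to flag is the genuinely degenerate all-ones case $k_1=n$, which falls outside the partition and is consistent with the statement only under the zero convention for the $q$-multinomials with a negative argument.
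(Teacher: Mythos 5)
Your proof is correct and follows essentially the same route as the paper: both partition $\mathcal{S}_n^{\:\!m}(k_1,\ldots,k_m)$ according to the terminal suffix $(j,1,\ldots,1)$ with $i$ trailing ones and show that the inversions contributed by that suffix equal $(n-k_1)\:\!i+\sum_{v=j+1}^{m}k_v$ independently of the prefix. The only difference is bookkeeping: the paper obtains this exponent by invoking Lemma \ref{lem:chunk} with three blocks (the prefix, the single $j$, and the $i$ trailing ones), whereas you compute it directly as received inversions via Proposition \ref{prop:received-induced}; the two counts agree.
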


\begin{proof}
Interpret ${n \choose {k_1,\ldots,k_m}}_q$ as the generating function for inversions on $\mathcal{S}_n^{\:\! m}(k_1, \ldots , k_m)\,$. We will apply Lemma \ref{lem:chunk} to achieve our desired result on the terms of the right-hand side. We will do so by letting: $p_1$ equal $n-i-1\,$; $\ell_{1,1}$ equal $k_1-i\,$; $\ell_{1,j}$ equal $k_j-1\,$; $\ell_{1,v}$ equal $k_v$ for all $v$ in $[m] \setminus \{1,j\}\,$; $p_2$ and $\ell_{2,j}$ equal $1\,$; $p_3$ and $\ell_{3,1}$ equal $i\,$.

Since this is our first application of Lemma \ref{lem:chunk}\,, we will provide explicit details regarding the Lemma's implementation. Note that: ${n-i-1 \choose {k_1-i,\ldots,k_m}}_q$ accounts for ordered pairs $(a,b)$ associated with inversions such that $a,b$ are both in the same $X_u\,$; there are $(n-k_1-1)\:\!i+\sum k_v$ ordered pairs $(a,b)$ associated with inversions such that $a$ is in $X_1$ and $b$ is not in $X_1\,$; and there are $i$ ordered pairs $(a,b)$ associated with inversions such that $a$ is in $X_2$ and $b$ is not in $X_2\,$. Hence, our factor of $q^{\scriptstyle(\cdots)}$ corresponds precisely with that in Lemma \ref{lem:chunk}\,.
\end{proof}

\subsection{Vandermonde's Identity}

Next, we will consider Vandermonde's Identity to develop a well-known $q$-analog,
\[ {n_1+n_2 \choose {k_1,\ldots,k_m}} \, = \, \sum_{\substack{r_1+\cdots+r_m \:\!= \:\!n_1 \\[1pt] 0 \leq r_i \leq k_i}} {n_1 \choose {r_1,\ldots,r_m}}{n_2 \choose {k_1-r_1, \ldots, k_m-r_m}} \, . \]
Interpreting ${{n_1+n_2} \choose {k_1,\ldots,k_m}}$ as the number of sequences in $\mathcal{S}_{n_1+n_2}^{\:\! m}(k_1, \ldots , k_m)\,$, then each term of the sum accounts for the sequences whose first $n_1$ elements contains exactly $r_1$ 1's, $\ldots$, $r_m$ m's.

\begin{proposition}\label{prop:Vandermonde}
If $m,n_1,n_2,k_1,\ldots,k_m$ are nonnegative integers such that $k_1+\cdots + k_m$ equals $n_1+n_2\,$, then \[ {n_1+n_2 \choose {k_1,\ldots,k_m}}_q \, = \, \sum_{\substack{r_1+\cdots+r_m \:\!= \:\!n_1 \\[1pt] 0 \:\! \leq \:\! r_i \:\! \leq \:\! k_i}} q{\large \strut}^{\Big(\, \underset{v = 2}{\overset{m}{\scriptstyle \sum}} f(r_v) \, \Big) } {n_1 \choose {r_1,\ldots,r_m}}_q{n_2 \choose {k_1-r_1, \ldots, k_m-r_m}}_q \] where $f(r_v) \, = \, r_v \:\! \underset{j =1}{\overset{v-1}{\sum}} (k_j-r_j)$ for every $v \in [m]\,$.
\end{proposition}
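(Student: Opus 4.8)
The plan is to interpret ${n_1+n_2 \choose {k_1,\ldots,k_m}}_q$ via Proposition \ref{prop:inv_multinom} as the generating function for the inversion statistic on $\mathcal{S}_{n_1+n_2}^{\:\!m}(k_1,\ldots,k_m)\,$, and then to partition this set according to the content of its first $n_1$ positions so that Lemma \ref{lem:chunk} (with $t=2$) applies term by term. The entire proposition should fall out as a direct application of that lemma, which the authors appear to have designed precisely for identities of this shape.

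First I would fix a composition $(r_1,\ldots,r_m)$ satisfying $r_1+\cdots+r_m=n_1$ and $0\leq r_v\leq k_v$ for each $v\,$, and let $T_{(r_1,\ldots,r_m)}$ denote the set of sequences in $\mathcal{S}_{n_1+n_2}^{\:\!m}(k_1,\ldots,k_m)$ whose first $n_1$ entries contain exactly $r_v$ copies of $v$ for every $v\in[m]\,$. As $(r_1,\ldots,r_m)$ ranges over all such compositions, these sets are pairwise disjoint and their union is all of $\mathcal{S}_{n_1+n_2}^{\:\!m}(k_1,\ldots,k_m)\,$; hence the generating function splits as a sum over the compositions, each summand being $\sum_{\sigma\in T_{(r_1,\ldots,r_m)}}q^{\text{inv}(\sigma)}\,$, indexed exactly as on the right-hand side of the claimed identity.

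Next I would evaluate each summand by applying Lemma \ref{lem:chunk} with $t=2\,$, setting $p_1=n_1\,$, $p_2=n_2\,$, $\ell_{1,v}=r_v\,$, and $\ell_{2,v}=k_v-r_v$ for every $v\in[m]\,$. The constraints imposed on $(r_1,\ldots,r_m)$ are precisely what is needed to make these valid nonnegative parameters with $\ell_{1,1}+\cdots+\ell_{1,m}=n_1$ and $\ell_{2,1}+\cdots+\ell_{2,m}=n_2\,$, and under this choice the set $T$ of the lemma is exactly $T_{(r_1,\ldots,r_m)}\,$. The product of multinomials produced by the lemma is then ${n_1 \choose {r_1,\ldots,r_m}}_q{n_2 \choose {k_1-r_1,\ldots,k_m-r_m}}_q\,$, matching the desired factors.

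The remaining step, which is really the only computation, is to check that the $q$-exponent furnished by Lemma \ref{lem:chunk} collapses to $\sum_{v=2}^m f(r_v)\,$. Since $t=2\,$, the term indexed by $u=2$ carries the empty inner sum $\sum_{i=3}^{2}$ and therefore vanishes, leaving only the $u=1$ term, namely $\sum_{v=2}^m \ell_{1,v}\sum_{j=1}^{v-1}\ell_{2,j}=\sum_{v=2}^m r_v\sum_{j=1}^{v-1}(k_j-r_j)\,$, which is exactly $\sum_{v=2}^m f(r_v)$ with $f(r_v)=r_v\sum_{j=1}^{v-1}(k_j-r_j)\,$. I do not anticipate a genuine obstacle; the only points demanding care are confirming that the partition of the sequence set aligns with the two-chunk structure of the lemma and that the exponent simplification is executed correctly in the degenerate $t=2$ case.
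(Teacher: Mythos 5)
Your proposal is correct and follows essentially the same route as the paper's own proof: both interpret the left-hand side as the inversion generating function on $\mathcal{S}_{n_1+n_2}^{\:\!m}(k_1,\ldots,k_m)$ and apply Lemma \ref{lem:chunk} with $t=2$, $p_1=n_1$, $p_2=n_2$, $\ell_{1,v}=r_v$, and $\ell_{2,v}=k_v-r_v$. Your explicit verification that the lemma's exponent collapses to $\sum_{v=2}^m f(r_v)$ (the $u=2$ term vanishing because its inner sum is empty) is a detail the paper leaves implicit, and it is carried out correctly.
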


\begin{proof}
Interpret ${{n_1+n_2} \choose {k_1,\ldots,k_m}}_q$ as the generating function for inversions on $\mathcal{S}_{n_1+n_2}^{\:\! m}(k_1, ..., k_m)\,$. We will apply Lemma \ref{lem:chunk} to achieve our desired result on the terms of the right-hand side. We will do so by letting: $p_1$ equal $n_1\,$; $\ell_{1,v}$ equal $r_v$ for all $v \in [m]\,$; $p_2$ equal $n_2\,$; and $\ell_{2,v}$ equal $k_v-r_v$ for all $v \in [m]\,$.  

Note that: the $q$-multinomial coefficients on the right-hand side account for ordered pairs $(a,b)$ associated with inversions such that $a,b$ are both in the same $X_u\,$; and $q^{\scriptstyle \sum f(r_v)}$ accounts for ordered pairs $(a,b)$ associated with inversions such that $a,b$ are not both in the same $X_u\,$.
\end{proof}
$\;$\\[-1.5em]

For a well-known generalization of Vandermonde's Identity, we will provide a $q$-analog.

\begin{proposition}\label{prop:genVandermone}
If $m,n_1,...,n_t,k_1,...,k_m$ are nonnegative integers such that $k_1+\cdots + k_m$ is equal to $n_1+\cdots+n_t\,$, then \[ {{n_1+\cdots+n_t} \choose {k_1,\ldots,k_m}}_q = \hspace{-3pt} \sum_{\substack{r_{u,1}+\cdots+r_{u,m} \:\!= \:\!n_u \\[1pt] r_{1,v}+\cdots+r_{t,v} \:\! = \:\! k_v \\[1pt] 0 \:\! \leq \:\! r_{u,v}}} q{\large \strut}^{\Big(\, \underset{u=1}{\overset{t}{\scriptstyle \sum}}\,\underset{v=2}{\overset{m}{\scriptstyle \sum}} f(r_{u,v}) \, \Big) } {n_1 \choose {r_{1,1}\:\!,\ldots,r_{1,m}}}_q \cdots {n_t \choose {r_{t,1}\:\!, \ldots, r_{t,m}}}_q \] where $f(r_{u,v}) \, = \, r_{u,v} \:\! \underset{i\:\!=\:\!u+1}{\overset{t}{\sum}} \, \underset{j\:\!=\:\! 1}{\overset{v-1}{\sum}} r_{i,j}$ for every $(u,v) \in [t]\times[m]\,$.
\end{proposition}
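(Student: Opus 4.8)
The plan is to treat this statement as the $t$-fold generalization of Proposition \ref{prop:Vandermonde} and to reduce it, just as in that proof, to a single application of Lemma \ref{lem:chunk}. First I would interpret the left-hand side, via Proposition \ref{prop:inv_multinom}, as the generating function for inversions on $\mathcal{S}_{n_1+\cdots+n_t}^{\:\!m}(k_1,\ldots,k_m)\,$. The governing idea is that every such sequence can be cut into $t$ consecutive blocks of lengths $n_1,\ldots,n_t$ (the first block being positions $1$ through $n_1\,$, the next block the following $n_2$ positions, and so on), and then classified by how many copies of each symbol fall into each block.

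Next I would record, for a given sequence, the quantity $r_{u,v}$ equal to the number of entries equal to $v$ occurring in the $u$-th block. Because the $u$-th block has $n_u$ positions, these satisfy $r_{u,1}+\cdots+r_{u,m}=n_u\,$; because the full sequence contains exactly $k_v$ copies of $v\,$, they satisfy $r_{1,v}+\cdots+r_{t,v}=k_v\,$. These are precisely the two families of constraints appearing beneath the summation sign, together with $0 \leq r_{u,v}\,$. I would then observe that the class of sequences associated with a fixed admissible array $(r_{u,v})$ is exactly the set $T$ of Lemma \ref{lem:chunk} upon setting $p_u=n_u$ and $\ell_{u,v}=r_{u,v}$ for all $(u,v) \in [t]\times[m]\,$.

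Applying Lemma \ref{lem:chunk} to this class then yields the product ${n_1 \choose {r_{1,1},\ldots,r_{1,m}}}_q \cdots {n_t \choose {r_{t,1},\ldots,r_{t,m}}}_q$ weighted by $q$ raised to the exponent $\sum_{u=1}^{t}\sum_{v=2}^{m}\ell_{u,v}\big(\sum_{i=u+1}^{t}\sum_{j=1}^{v-1}\ell_{i,j}\big)\,$, which upon substituting $\ell_{u,v}=r_{u,v}$ is exactly $\sum_{u=1}^{t}\sum_{v=2}^{m} f(r_{u,v})$ with $f$ as defined in the statement. Since the classes indexed by the admissible arrays partition $\mathcal{S}_{n_1+\cdots+n_t}^{\:\!m}(k_1,\ldots,k_m)\,$, summing these contributions over all admissible $(r_{u,v})$ reproduces the right-hand side and completes the argument.

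I expect the only genuine obstacle to be bookkeeping rather than mathematics: confirming that the constraints $r_{u,1}+\cdots+r_{u,m}=n_u\,$, $r_{1,v}+\cdots+r_{t,v}=k_v\,$, and $0\leq r_{u,v}$ describe exactly the admissible block-compositions with neither over- nor under-counting, and verifying term-by-term that the exponent supplied by Lemma \ref{lem:chunk} coincides with $\sum_{u=1}^{t}\sum_{v=2}^{m} f(r_{u,v})\,$. Since Lemma \ref{lem:chunk} already absorbs all of the inversion accounting across blocks, no further combinatorial analysis of inversions should be needed.
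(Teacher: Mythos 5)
Your proposal is correct and follows essentially the same route as the paper: interpret the left-hand side as the inversion generating function on $\mathcal{S}_{n_1+\cdots+n_t}^{\:\!m}(k_1,\ldots,k_m)\,$, classify sequences by the block-composition array $(r_{u,v})\,$, and apply Lemma \ref{lem:chunk} with $p_u=n_u$ and $\ell_{u,v}=r_{u,v}\,$. The paper's own proof is just a terser statement of exactly this argument.
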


\begin{proof}
Consider $\mathcal{S}_{n_1+\cdots+n_s}^{\:\! m}(k_1, \ldots, k_m)\,$, and interpret ${{n_1+\cdots+n_s} \choose {k_1,\ldots,k_m}}_q$ as the generating function for inversions on this set of sequences. Letting $p_i$ equal $n_i$ for all $i \in [t]$ and $\ell_{u,v}$ equal $r_{u,v}$ for all $(u,v)$ in $[t]\times[m]\,$, the application of Lemma \ref{lem:chunk} is direct.
\end{proof}

\subsection{Chu Shih-Chieh (Zhu Shijie)'s Identity}

We will now consider the well-known Chu Shih-Chieh's Identity,
\[ {n \choose {k_1,\ldots,k_m}} \, = \, \sum_{r=0}^{n-k_1} \, \sum_{\substack{r_2+\cdots+r_m \:\!= \:\!r \\[1pt] 0 \leq r_j \leq k_j}} {r \choose {0,r_2,\ldots,r_m}}{n-r-1 \choose {k_1-1,k_2-r_2, \ldots, k_m-r_m}} \, . \]
Interpreting ${{n} \choose {k_1,\ldots,k_m}}$ as the number of sequences in $\mathcal{S}_n^{\:\! m}(k_1,\ldots, k_m)\,$, then each term of the sum accounts for the sequences $( \, \sigma_1, \, \ldots, \, \sigma_n \,)$ such that: $\sigma_{r+1}$ equals 1; $( \, \sigma_1, \, \ldots, \, \sigma_r \,)$ is a sequence with $r_2$ 2's, $\ldots$, $r_m$ m's; and $( \, \sigma_{r+2}, \, \ldots, \, \sigma_n \,)$ is a sequence with $k_1-1$ 1's, $k_2-r_2$ 2's, $\ldots$, $k_m-r_m$ m's.

A lesser-known, albeit natural, generalization follows.

\begin{proposition}\label{prop:Chu}
If $m,n, k_1, \ldots, k_m$ are nonnegative integers such that $k_1+\cdots+k_m$ is equal to $n\,$, then
\[ {n \choose {k_1,\ldots,k_m}} \, = \, \sum_{\substack{E \subseteq [n] \\[1pt] |E|=k_1}} \, \sum_{\substack{r_{i,2}+\cdots+r_{i,m} \:\!= \:\!n_i \\[1pt] r_{1,j}+ \cdots + r_{s,j} \:\! = \:\! k_j \\[1pt]  0 \leq r_{i,j}}} {{n_1} \choose {0,r_{1,2},...,r_{1,m}}} \cdots {n_s \choose {0,r_{s,2}\:\!, ..., r_{s,m}} } \]
where $E \, = \, \{ \, e_1, \, \ldots, \, e_{k_1} \, \}$ with $e_i < e_{i+1}$ for every $1\leq i \leq k_1-1\,$; $s$ is equal to $k_1+1\,$; $n_1$ equals $e_1-1\,$; $n_i$ equals $e_i-e_{i-1}-1$ for every $2\leq i \leq k_1\,$; and $n_s$ equals $n-e_{k_1}\,$.
\end{proposition}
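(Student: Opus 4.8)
The plan is to give a direct counting argument, interpreting the left-hand side ${n \choose {k_1,\ldots,k_m}}$ as the cardinality $\left| \, \mathcal{S}_n^{\:\! m}(k_1,\ldots,k_m) \, \right|$ in the spirit of Proposition \ref{prop:inv_multinom}, and then partitioning this set according to the positions occupied by the $1$'s. Given a sequence $\sigma = ( \, \sigma_1, \, \ldots, \, \sigma_n \,)$ in $\mathcal{S}_n^{\:\! m}(k_1,\ldots,k_m)\,$, let $E = \{ \, a \in [n] \mid \sigma_a = 1 \, \}\,$; since $\sigma$ has exactly $k_1$ ones, $E$ is a subset of $[n]$ of cardinality $k_1\,$, so the outer sum over $E$ ranges precisely over the possible patterns of $1$-positions. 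This records the essential new feature of the generalization over the classical identity, where one fixes only the position of the first $1\,$.

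First I would verify that fixing $E = \{ \, e_1 < \cdots < e_{k_1} \, \}$ splits the remaining coordinates of $\sigma$ into the $s = k_1 + 1$ consecutive blocks delimited by the $1$'s: the block before $e_1$ (of length $n_1 = e_1 - 1$), the blocks strictly between consecutive $1$'s (of lengths $n_i = e_i - e_{i-1} - 1$), and the block after $e_{k_1}$ (of length $n_s = n - e_{k_1}$). These are exactly the $n_i$ defined in the statement, and they satisfy $n_1 + \cdots + n_s = n - k_1\,$. Each block consists entirely of entries drawn from $\{ \, 2,\ldots,m \, \}\,$; letting $r_{i,j}$ denote the number of $j$'s appearing in block $i\,$, the block-length condition forces $r_{i,2} + \cdots + r_{i,m} = n_i\,$, while counting each symbol across all blocks forces $r_{1,j} + \cdots + r_{s,j} = k_j$ for every $j$ in $\{ \, 2,\ldots,m \, \}\,$. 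These are precisely the constraints indexing the inner sum.

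The final step is to count, for a fixed $E$ and a fixed admissible tuple $(r_{i,j})\,$, the sequences $\sigma$ realizing them. Within block $i$ one must arrange $r_{i,2}$ $2$'s through $r_{i,m}$ $m$'s, and no $1$'s, into a word of length $n_i\,$; the number of such arrangements is exactly the multinomial coefficient ${n_i \choose {0,r_{i,2},\ldots,r_{i,m}}}\,$, the leading $0$ recording the absence of $1$'s. Since the blocks are filled independently, the product $\prod_{i=1}^{s} {n_i \choose {0,r_{i,2},\ldots,r_{i,m}}}$ counts the sequences sharing this $E$ and this $(r_{i,j})\,$. Summing first over admissible tuples and then over all $E$ exhausts $\mathcal{S}_n^{\:\! m}(k_1,\ldots,k_m)$ with no repetition, yielding the identity. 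I expect the only real obstacle to be bookkeeping rather than mathematics: one must confirm that the assignment $\sigma \mapsto \bigl( E, (r_{i,j}) \bigr)$ together with the per-block arrangement data is a genuine bijection onto the index set of the double sum, taking care with degenerate blocks (empty segments, and the boundary case $k_1 = 0$ where $s = 1$ and the single block is all of $[n]$) so that no sequence is double-counted or omitted.
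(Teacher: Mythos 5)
Your proposal is correct and follows essentially the same route as the paper's proof: interpret ${n \choose {k_1,\ldots,k_m}}$ as counting $\mathcal{S}_n^{\:\! m}(k_1,\ldots,k_m)\,$, let $E$ record the positions of the $1$'s, and count the arrangements of the remaining symbols in the $s=k_1+1$ blocks delimited by those positions via the multinomial coefficients ${n_i \choose {0,r_{i,2},\ldots,r_{i,m}}}\,$. Your write-up is in fact somewhat more explicit than the paper's about the bijection and the degenerate cases, but the underlying argument is identical.
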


\begin{proof}
Interpret ${n \choose {k_1,\ldots,k_m}}$ as the number of sequences in $\mathcal{S}_n^{\:\! m}(k_1,\ldots, k_m)\,$. Each term of the sum accounts for the sequences $( \, \sigma_1, \, \ldots, \, \sigma_n \,)$ such that: $\sigma_i$ equals $1$ if and only if $i$ is in $E\,$; the subsequence $( \, \sigma_1, \, \ldots, \, \sigma_{e_1-1} \,)$ contains $r_{1,2}$ 2's, $\ldots$, $r_{1,m}$ m's; for every $2\leq i \leq k_1\,$, the subsequence $( \, \sigma_{e_{i-1}+1}, \, \ldots, \, \sigma_{e_i-1} \,)$ contains $r_{i,2}$ 2's, $\ldots$, $r_{i,m}$ m's; and the subsequence $( \, \sigma_{e_{s-1}+1}, \, \ldots, \, \sigma_n \,)$ contains $r_{s,2}$ 2's, $\ldots$, $r_{s,m}$ m's.
\end{proof}
$\;$\\[-1.5em]

A $q$-analog of Proposition \ref{prop:Chu} follows.

\begin{proposition}\label{prop:Chu_q}
If $m,n, k_1, \ldots, k_m$ are nonnegative integers such that $k_1+\cdots+k_m$ is equal to $n\,$, then ${n \choose {k_1,\ldots,k_m}}_q$ is equal to
\[ \sum_{\substack{E \subseteq [n] \\[1pt] |E|=k_1}} \, \sum_{\substack{r_{u,2}+\cdots+r_{u,m} \:\!= \:\!n_u \\[1pt] r_{1,v}+ \cdots + r_{s,v} \:\! = \:\! k_v \\[1pt]  0 \leq r_{u,v}}} q{\large \strut}^{\Big(\underset{u=1}{\overset{s}{\scriptstyle \sum}} \, \underset{v=2}{\overset{m}{\scriptstyle \sum}} \, f(r_{u,v}) \, \Big) } {{n_1} \choose {0,r_{1,2},...,r_{1,m}}}_q \cdots {n_s \choose {0,r_{s,2}\:\!, ..., r_{s,m}} }_q \]
where $E \, = \, \{ \, e_1, \, \ldots, \, e_{k_1} \, \}$ with $e_i < e_{i+1}$ for every $1\leq i \leq k_1-1\,$; $s$ is equal to $k_1+1\,$; $n_1$ equals $e_1-1\,$; $n_i$ equals $e_i-e_{i-1}-1$ for every $2\leq i \leq k_1\,$;  $n_s$ equals $n-e_{k_1}\,$; and $f(r_{u,v}) \, = \, r_{u,v} \:\! \Big( \, k_1-u+1 +  \underset{i\:\!=\:\!u+1}{\overset{s}{\sum}} \,\:\!  \underset{j\:\!=\:\! 2}{\overset{v-1}{\sum}} r_{i,j}\, \, \Big)$ for every $(u,v) \in [s]\times[m]$\,.
\end{proposition}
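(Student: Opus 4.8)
The plan is to mirror the combinatorial bookkeeping of Proposition \ref{prop:Chu} and then read off the inversion-counting exponent from Lemma \ref{lem:chunk}. First I would interpret ${n \choose {k_1,\ldots,k_m}}_q$ as the generating function for inversions on $\mathcal{S}_n^{\:\!m}(k_1,\ldots,k_m)$ via Proposition \ref{prop:inv_multinom}. For a fixed $\sigma$, the positions of its $k_1$ ones determine a unique set $E = \{ \, e_1 < \cdots < e_{k_1} \, \}$, and the entries lying before the first one, strictly between consecutive ones, and after the last one determine the gap compositions $(r_{u,v})$. Thus the sets $T_{E,r}$ of sequences with prescribed one-positions $E$ and prescribed gap compositions partition $\mathcal{S}_n^{\:\!m}(k_1,\ldots,k_m)$, so summing over all $E$ with $|E|=k_1$ and over all admissible $(r_{u,v})$ reconstitutes the full generating function, exactly matching the outer double sum of the claim.

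Next, for each fixed pair $(E,r)$ I would apply Lemma \ref{lem:chunk} with the alternating chunking
\[ G_1, \{1\}, G_2, \{1\}, \ldots, G_{k_1}, \{1\}, G_s, \]
that is, $t = 2k_1+1$ chunks in which the odd chunks are the gaps $G_1,\ldots,G_s$ (gap $G_u$ having length $n_u$ and composition $\ell_{\cdot,1}=0$, $\ell_{\cdot,v}=r_{u,v}$ for $v\geq 2$) while the even chunks are the singleton ones (length $1$, composition $\ell_{\cdot,1}=1$). Each singleton chunk contributes the trivial factor ${1 \choose 1}_q = 1$ to the product, and, carrying value $1$, contributes nothing to the exponent of Lemma \ref{lem:chunk}, whose outer sum ranges only over $v\geq 2$. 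Hence the product of $q$-multinomials supplied by the Lemma collapses to precisely $\prod_{u=1}^s {n_u \choose {0, r_{u,2},\ldots,r_{u,m}}}_q$, as required.

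The crux is verifying that the exponent produced by Lemma \ref{lem:chunk} equals $\sum_{u=1}^s \sum_{v=2}^m f(r_{u,v})$. Fixing a gap $G_u = X_{2u-1}$ and a value $v \geq 2$, its contribution is $r_{u,v}$ times the number of strictly smaller entries occurring in later chunks, namely $\sum_{j=1}^{v-1}\ell_{i,j}$ summed over all chunks $i$ after $G_u$. The $j=1$ part counts the singleton ones following $G_u$, which are $\{1\} = X_{2u}, X_{2u+2}, \ldots, X_{2k_1}$, i.e. exactly $k_1-u+1$ of them, producing the $k_1-u+1$ summand of $f$; the $2\leq j \leq v-1$ part counts entries of value between $2$ and $v-1$ in the later gaps $G_{u+1},\ldots,G_s$, producing $\sum_{i=u+1}^s \sum_{j=2}^{v-1} r_{i,j}$. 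The main care needed is this index bookkeeping between a gap's label $u$ and its position $2u-1$ in the chunking, so that ``later chunks'' correctly picks up both the trailing ones and the subsequent gaps (and so that the boundary case $u=s$, where no chunk follows $G_s$, matches $f(r_{s,v})=0$). Summing these per-block identities over all $E$ and all admissible $(r_{u,v})$ then yields the proposition.
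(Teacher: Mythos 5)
Your proposal is correct and takes essentially the same route as the paper's proof: both interpret ${n \choose {k_1,\ldots,k_m}}_q$ as the inversion generating function and apply Lemma \ref{lem:chunk} to the alternating chunking of gaps $G_1,\ldots,G_s$ (odd chunks) and singleton $1$'s (even chunks), identifying $k_1-u+1$ as the count of trailing singletons and $\sum_{i=u+1}^{s}\sum_{j=2}^{v-1} r_{i,j}$ as the contribution of later gaps. You merely spell out the partition of $\mathcal{S}_n^{\:\!m}(k_1,\ldots,k_m)$ by $(E,r)$ and the $u=s$ boundary case more explicitly than the paper does.
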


\begin{proof}
Interpret ${n \choose {k_1,\ldots,k_m}}_q$ as the generating function for inversions on $\mathcal{S}_n^{\:\! m}(k_1, \ldots, k_m)\,$. We will apply Lemma \ref{lem:chunk} to achieve our desired result on the sequences $\sigma$ associated with the terms of the right-hand side. We will do so by letting: $p_{2k}$ equal $1$ for all $k$ in $[k_1]\,$; $\ell_{2k,1}$ equal $1$ for all $k$ in $[k_1]\,$; $p_{2k-1}$ equal $n_k$ for all $k$ in $[s]\,$; $\ell_{2k-1,1}$ equal zero for all $k$ in $[s]\,$; and $\ell_{2k-1,v}$ equal $r_{k,v}$ for all $k$ in $[s]$ and for all $v$ in $[m]\setminus{1}\,$.

Observe that: the $q$-multinomial coefficients on the right-hand side account for ordered pairs $(a,b)$ associated with inversions such that $a,b$ are both in $X_{2k-1}$ for some $k$ in $[s]\,$; when $a$ is in $X_{2k}$ for some $k$ in $[k_1]\,$, then $\sigma_a$ equals $1$ and no ordered pairs $(a,b)$ are associated with inversions; $r_{u,v}\left(k_1-u+1\right)$ accounts for ordered pairs $(a,b)$ associated with inversions such that $a$ is in $X_{2k-1}$ for some $k$ in $[s]\,$, $\sigma_a$ equals $v\,$, and $b$ is in $X_{2j}$ for some $j$ in $[k_1]\,$; and $r_{u,v}\left( \, {\scriptstyle \sum} \, {\scriptstyle \sum} \, r_{i,j}  \, \right)$ accounts for ordered pairs $(a,b)$ associated with inversions such that $a$ is in $X_{2k-1}$ for some $k$ in $[s]\,$, $\sigma_a$ equals $v\,$, and $b$ is in $X_{2j-1}$ for some $j$ in $[s]$ where $j$ is greater than that $k\,$. \end{proof}

\subsection{``Apartment Complex" Identity}

The following identity was inspired from an identity contained in \cite{Old}. Consider a hypothetical scenario with an apartment complex whose buildings will contain exactly one unit per floor. Assume there are to be $n_1$ buildings, with $n_2$ of them receiving a second floor. Exactly $k$ of the units will be rented.

\[ {n_1 \choose n_2}{{n_1+n_2} \choose k} \, = \, \sum_{k_1+k_2=k} \, {n_1 \choose k_1}{n_1 \choose {n_1-n_2,k_2,n_2-k_2}} \, . \]

The complex owner could first choose which $n_2$ of the $n_1$ buildings will receive a second floor, and then $k$ tenants could choose which of the $n_1+n_2$ units to rent. Alternatively, for all $k_1$ in between $0$ and $k\,$, the owner could first rent out $k_1$ of the $n_1$ first floor units, and then of the $n_1$ buildings: $n_1-n_2$ buildings could receive no second floor; $k_2$ of them could receive a second floor that is rented; and $n_2-k_2$ could receive a second floor that is unrented. This naturally generalizes as follows.

\begin{proposition}\label{prop:Apartment}
If $n_1,\ldots,n_j, k$ are nonnegative integers such that $n_j \leq \cdots \leq n_1$ and $k \leq n_1 + \cdots + n_j\,$, then 
\[ \left( \, \prod_{i=2}^{j}{n_{i-1} \choose n_i} \, \right) {{n_1+\cdots+n_j} \choose k} \, = \sum_{k_1+\cdots+k_j=k} \,  {n_1 \choose k_1}\: \prod_{i=2}^{j}{n_{i-1} \choose {n_{i-1}-n_{i},n_{i}-k_{i},k_{i}}} \, . \]
\end{proposition}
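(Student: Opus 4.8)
The plan is to give a combinatorial double-counting proof, carried out directly in the apartment-complex language that motivates the statement and mirroring the style used for Proposition \ref{prop:Chu}. The objects I would count are pairs consisting of (i) a \emph{shape}: an assignment to each of the $n_1$ buildings of a number of floors, subject to the nesting requirement that exactly $n_i$ buildings possess an $i$-th floor (so that, since $n_j \le \cdots \le n_1$, a building can have an $i$-th floor only if it already has an $(i-1)$-th floor); together with (ii) a \emph{rental}: a choice of exactly $k$ of the units to be rented, out of the $n_1 + \cdots + n_j$ units present in that shape.

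First I would show that the left-hand side counts these pairs directly. The product $\prod_{i=2}^{j} {n_{i-1} \choose n_i}$ enumerates the shapes: proceeding upward, among the $n_{i-1}$ buildings that already carry an $(i-1)$-th floor, choose the $n_i$ that additionally receive an $i$-th floor. Since every shape has precisely $n_1 + \cdots + n_j$ units, the factor ${n_1+\cdots+n_j \choose k}$ then counts the rentals independently of the shape, and the product gives the left-hand side.

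Next I would show that the right-hand side counts the same pairs, now organized according to the vector $(k_1, \ldots, k_j)$ recording how many rented units lie on each floor level. The factor ${n_1 \choose k_1}$ selects the $k_1$ rented first-floor units from the $n_1$ buildings (all of which have a first floor). For each level $i \ge 2$, I would argue that the single multinomial ${n_{i-1} \choose n_{i-1}-n_i, n_i - k_i, k_i}$ simultaneously encodes shape and rental at that level, partitioning the $n_{i-1}$ buildings with an $(i-1)$-th floor into the $n_{i-1}-n_i$ with no $i$-th floor, the $n_i - k_i$ with an unrented $i$-th floor, and the $k_i$ with a rented $i$-th floor. Summing the product over all $(k_1, \ldots, k_j)$ with $k_1 + \cdots + k_j = k$ and $0 \le k_i$ then sweeps over all rental distributions, and I would conclude that both sides count the same set.

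The point requiring care is the interface between shape and rental on the right-hand side: one must check that the ``universe'' for placing the $i$-th floor at each level is exactly the set of $n_{i-1}$ buildings carrying an $(i-1)$-th floor, which is precisely what the preceding level's multinomial produces, so that the level-by-level construction reassembles each shape exactly once and never violates nesting. The hypotheses $n_j \le \cdots \le n_1$ are what guarantee these multinomials are well formed (nonnegative lower entries), and terms with $k_i$ outside the feasible range $0 \le k_i \le n_i$ contribute nothing because the corresponding multinomial vanishes, so no separate bookkeeping of the ranges of the $k_i$ is needed. An alternative route would be induction on $j$ using the displayed $j=2$ identity as the base case and peeling off the top floor, but the direct count is cleaner and keeps the combinatorial story transparent.
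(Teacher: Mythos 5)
Your double count is correct: both sides enumerate pairs consisting of a nested floor assignment (exactly $n_i$ buildings receiving an $i$-th floor, each necessarily already having an $(i-1)$-th floor) together with a choice of $k$ rented units, the right-hand side being organized by the vector $(k_1,\ldots,k_j)$ of rented units per level. You also correctly flag the two points of care: that the multinomial at level $i$ acts on the $n_{i-1}$ buildings produced by the previous level's choice (only the cardinality matters), and that infeasible $k_i$ kill their terms. The combinatorial content matches the paper's, but the packaging differs in a way worth noting. The paper does not argue by counting; it constructs an explicit bijection $\varphi$ from the product of sequence sets $S_{n_{i-1}}^{\:\!2}(n_{i-1}-n_i,n_i)$ (the shape) and $S_{n_1+\cdots+n_j}^{\:\!2}(n_1+\cdots+n_j-k,k)$ (the rental) onto a disjoint union of products involving the three-letter sequence sets $S_{n_{i-1}}^{\:\!3}(n_{i-1}-n_i,n_i-k_i,k_i)$, writing out exactly how the symbols $1,2,3$ in $\tau^{i+1}$ are determined by $\sigma^i$ and the relevant block of $\sigma^j$. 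That extra machinery is not needed for the plain identity --- your count is the cleaner proof of Proposition \ref{prop:Apartment} itself --- but it is what the paper leans on in Proposition \ref{prop:q-Apartment}, where the same map $\varphi$ is analyzed to track how inversions transfer between $\sigma^j$, the $\sigma^i$, and the $\tau^i$. So if you intend to continue to the $q$-analog, you would have to upgrade your bijection-free count to an explicit inversion-preserving (up to the factor $q^{f(K)}$) correspondence, which is essentially what the paper's proof already is.
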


\begin{proof}
For every $2 \leq i \leq j\,$, let $S_{i-1}$ be the set $S_{n_{i-1}}^{\:\! 2}(n_{i-1}-n_i,n_i)\,$. Also let $S_j$ be the set $S_{n_1+\cdots+n_j}^{\:\! 2}(n_1+\cdots+n_j-k, k)\,$. For every $k_1+\cdots+k_j$ equal to $k\,$, let: $T_1^{(k_1,\ldots,k_j)}$ be the set $S_{n_1}^{\:\! 2}(n_1-k_1, k_1)\,$; and for every $2 \leq i \leq j\,$, let $T_i^{(k_1,\ldots,k_j)}$ be the set $S_{n_{i-1}}^{\:\! 3} (n_{i-1}-n_i,n_i-k_i,k_i )\,$.

Define \[ \varphi \colon \prod_{i=1}^j S_i \rightarrow \coprod_{k_1+\cdots+k_j=k} \left( \, \prod_{i=1}^j T_i^{(k_1,\ldots,k_j)} \, \right) \;\; \text{via} \;\; (\sigma^1,\ldots,\sigma^j) \mapsto (\tau^1 , \ldots , \tau^j) \] in the following way. For every $1\leq i \leq j-1\,$, let $C_{i} \, = \, \{ \, s \in [n_{i}] \, \mid \, \sigma_s^{i} \, = \, 2 \, \}\,$. Express $C_i$ as $\{ \, c_{i,1}\,, \ldots , c_{i,n_{i+1}} \, \}$ where $c_{i,p} < c_{i,p+1}$ for every $1 \leq p \leq n_{i+1}-1\,$. Further, let $N_i$ be equal to $n_1+\cdots +n_i\,$. Finally, for every $1\leq i \leq j-1\,$, let
\begin{align*}
\tau_s^1 \, &= \, \sigma_s^j \: ,  \\[3pt]
\tau_s^{i+1} \, &= \, \begin{cases} 1 \; \text{ if } \; \sigma_s^i \, = \, 1 \: ,  \\[2pt] 2 \; \text{ if } \; \sigma_s^i \, = \, 2 \; \text{ and } \; \sigma_{N_i+p}^{\:\!j} \, = \, 1 \; \text{ where } \; s\, = \, c_{i,p} \: , \\[2pt] 3 \; \text{ if } \; \sigma_s^i \, = \, 2 \; \text{ and } \; \sigma_{N_i+p}^{\:\!j} \, = \, 2 \; \text{ where } \; s \, = \, c_{i,p} \: . \end{cases}
\end{align*}
The desired result follows from observing that $\varphi$ is bijective.
\end{proof}
$\;$\\[-1.5em]

Our $q$-analog of Proposition \ref{prop:Apartment} follows.

\begin{proposition}\label{prop:q-Apartment}
If $n_1,\ldots,n_j, k$ are nonnegative integers such that $n_j \leq \cdots \leq n_1$ and $k \leq n_1 + ... + n_j\,$, then 
\[ \left( \, \prod_{i=2}^{j}{n_{i-1} \choose n_i}_q \, \right) {{n_1+ \cdots +n_j} \choose k}_q \, = \sum_{k_1+\cdots+k_j=k} q^{f(K)}  {n_1 \choose k_1}_q \: \prod_{i=2}^{j}{n_{i-1} \choose {n_{i-1}-n_{i},n_{i}-k_{i},k_{i}}}_q \] where $f(K) \, = \, \underset{u\:\!=\:\! 1}{\overset{j-1}{\sum}} k_u \, \Big( \,  \underset{i\:\!=\:\!u+1}{\overset{j}{\sum}} (n_i-k_i) \, \Big)$ for every $K$ equal to $( \, k_1, \ldots,k_j \, )\,$.
\end{proposition}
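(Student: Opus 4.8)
The plan is to reuse the bijection $\varphi$ built in the proof of Proposition~\ref{prop:Apartment} and to track how it transforms the inversion statistic. By Proposition~\ref{prop:inv_multinom}, the left-hand side is the generating function for $\text{inv}(\sigma^1)+\cdots+\text{inv}(\sigma^j)$ as $(\sigma^1,\ldots,\sigma^j)$ ranges over $\prod_{i=1}^{j} S_i\,$, and, for each fixed $K=(k_1,\ldots,k_j)\,$, the product ${n_1 \choose k_1}_q \prod_{i=2}^{j}{n_{i-1} \choose {n_{i-1}-n_{i},n_{i}-k_{i},k_{i}}}_q$ is the generating function for $\text{inv}(\tau^1)+\cdots+\text{inv}(\tau^j)$ over $\prod_{i=1}^{j} T_i^{(k_1,\ldots,k_j)}\,$. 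Since $\varphi$ is a bijection sending $(\sigma^1,\ldots,\sigma^j)$ to some $(\tau^1,\ldots,\tau^j)$ with a determined $K\,$, the identity reduces to proving, for each such pair, the single equation
\[ \text{inv}(\sigma^1)+\cdots+\text{inv}(\sigma^j) \, = \, f(K) + \text{inv}(\tau^1)+\cdots+\text{inv}(\tau^j) \, . \]

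To prove this, write $N_i=n_1+\cdots+n_i$ and let $B_\ell=(\sigma_{N_{\ell-1}+1}^{\,j},\ldots,\sigma_{N_\ell}^{\,j})$ be the $\ell$-th length-$n_\ell$ block of $\sigma^j\,$. The defining rule of $\varphi$ gives $\tau^1=B_1\,$, so $\text{inv}(\tau^1)=\text{inv}(B_1)\,$. For $1\leq i\leq j-1\,$, the sequence $\tau^{i+1}$ is obtained from $\sigma^i$ by fixing its $1$'s and promoting each of its $n_{i+1}$ twos to a $2$ or a $3$ according to the matching entry of $B_{i+1}\,$. Sorting the inversions of $\tau^{i+1}$ by the underlying value in $\sigma^i$ yields $\text{inv}(\tau^{i+1})=\text{inv}(\sigma^i)+\text{inv}(B_{i+1})\,$: every two-before-one inversion of $\sigma^i$ persists because a promoted $2$ or $3$ still exceeds a $1\,$; no inversion is created between a $1$ and a promoted entry; and the only new inversions sit among the promoted positions, where a $3$ preceding a $2$ corresponds precisely to a $2$ preceding a $1$ inside $B_{i+1}\,$. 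Summing these relations gives
\[ \text{inv}(\tau^1)+\cdots+\text{inv}(\tau^j) \, = \, \sum_{i=1}^{j-1}\text{inv}(\sigma^i) \, + \, \sum_{\ell=1}^{j}\text{inv}(B_\ell) \, . \]

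Subtracting this from $\text{inv}(\sigma^1)+\cdots+\text{inv}(\sigma^j)$ leaves exactly $\text{inv}(\sigma^j)-\sum_{\ell=1}^{j}\text{inv}(B_\ell)\,$, the number of inversions of $\sigma^j$ whose two indices lie in different blocks. Each block $B_\ell$ has length $n_\ell$ and contains $k_\ell$ twos (for $\ell=i+1$ these are the promoted-to-$3$ positions of $\tau^{i+1}\,$, of which there are $k_{i+1}\,$, while $B_1$ has $k_1$ twos because $\tau^1\in S_{n_1}^{\:\!2}(n_1-k_1,k_1)$). As an earlier block entirely precedes a later one, each two in block $u$ is inverted with each of the $n_w-k_w$ ones in a later block $w\,$, so this cross-block count equals $\sum_{1\leq u<w\leq j} k_u(n_w-k_w)=f(K)\,$, establishing the displayed equation. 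I expect the main obstacle to be the middle step: verifying carefully that the only inversions of $\tau^{i+1}$ beyond those inherited from $\sigma^i$ are the internal inversions of $B_{i+1}\,$, which forces a check of each value-type of $\sigma^i$ against the promotion rule. The cross-block computation and its identification with $f(K)$ are then straightforward bookkeeping over pairs of blocks.
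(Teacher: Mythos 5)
Your proposal is correct and follows essentially the same route as the paper: both reuse the bijection $\varphi$ from Proposition~\ref{prop:Apartment} and reduce the identity to the statistic equation $\sum_i \text{inv}(\sigma^i) = f(K) + \sum_i \text{inv}(\tau^i)\,$, identifying $f(K)$ with the cross-block inversions of $\sigma^j$ and matching the within-block inversions of $\sigma^j$ together with $\text{inv}(\sigma^1),\ldots,\text{inv}(\sigma^{j-1})$ to the $\text{inv}(\tau^i)\,$. The only difference is presentational: you verify $\text{inv}(\tau^{i+1})=\text{inv}(\sigma^i)+\text{inv}(B_{i+1})$ by direct case analysis on pair types, whereas the paper packages the same accounting through Lemma~\ref{lem:chunk} and the received-inversion decomposition of Corollary~\ref{cor:received-induced}.
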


\begin{proof}
We will utilize the notation of Proposition \ref{prop:Apartment} and interpret the $q$-analogs within this identity as generating functions for the inversion statistic on sequences.

We will begin by accounting for the inversions associated with ${{n_1+ \cdots + n_j} \choose k}_q\,$, namely the inversions associated with $\sigma^j\,$. We will apply Lemma \ref{lem:chunk} by letting (for all $i$ in $[j]\,$): $p_i$ equal $n_i\,$; $\ell_{i,1}$ equal $n_i-k_i\,$; and $\ell_{i,2}$ equal $k_i\,$. Note that for every $i$ in $[j]\,$, the ordered pairs $(a,b)$ associated with inv$\left( \sigma^j \right)$ such that $a,b$ are both in $X_i\,$ are accounted for by \begin{align*}
\text{inv}\left( \tau^1 \right) \, , &\;\;\text{when } \; i=1\, ; \\[2pt] \sum_{\tau_s^i \:\! = \:\! 2} \text{r}\left( \tau_s^i \right) \, , & \;\;\text{when } \; i\geq 2\, .
\end{align*} Also note that $q^{f(K)}$ accounts for ordered pairs $(a,b)$ associated with inv$\left( \sigma^j\right)$ such that $a,b$ are not both in $X_i$ for some $i$ in $[j]\,$.

We will now account for inversions associated with $\prod {n_{i-1} \choose n_i}_q\,$. Observe that for every $2 \leq i \leq j\,$, \[ \text{inv}\left( \sigma^{i-1} \right) \, = \, \sum_{\sigma_s^{i-1} \:\! = \:\! 1} \text{r}\left( \sigma_s^{i-1} \right) \, = \, \sum_{\tau_s^i \:\! = \:\! 1} \text{r} \left( \tau_s^i \right) \, . \]

The desired result follows as an application of Corollary \ref{cor:received-induced}. \end{proof}
$\;$\\[-1.5em]

Notice that developing a deep enumerative understanding of the original ``apartment complex" identity in terms of sequences enabled us to develop a corresponding $q$-analog. It is the viewpoint of the authors that a complete grasp of the enumerative combinatorics of any binomial or multinomial identity supplements the development of a $q$-analog generalization.

\section{Integer Partitions and Galois Numbers} 

In this section, we will introduce the major index statistic and generalized Galois numbers. Ultimately, we will develop a theorem that reveals a connection between the coefficients of generalized Galois numbers and integer partitions with kinds.

\subsection{Major Index Statistic}

We will now formally define the well-known major index statistic. From our experience, the major index statistic more naturally and elegantly conveys the results sought in this section, which were proving to be a cumbersome endeavor using the inversion statistic. 

\begin{definition}\label{def:majors}
If $m,n$ are nonnegative integers and $\sigma = ( \, \sigma_1, \, \ldots, \, \sigma_n \,)$ is a sequence whose elements are in $[m]\,$, then \[ \emph{maj}(\sigma) \, \coloneqq \, \sum_{\substack{a \in [n-1] \\ \sigma_a \:\! > \:\! \sigma_{a+1}}} a \: . \] The value of \emph{maj(}$\sigma$\emph{)} shall be referred to as the \textbf{major index of $\bm{\sigma}$}\,.
\end{definition}

Figure \ref{fig:sequences1} contains some examples. Observe the parallelism between Figures \ref{fig:sequences} and \ref{fig:sequences1}.

\begin{figure}[!htb]
\begin{center}
$\begin{array}{ccccc}
2211 & \hspace{.25in} & 2121 & \hspace{.25in} & 2112 \\
\text{maj}(\sigma)=2 && \text{maj}(\sigma)=4 && \text{maj}(\sigma)=1 \\[16pt]
1221 && 1212 && 1122 \\
\text{maj}(\sigma)=3 && \text{maj}(\sigma)=2 && \text{maj}(\sigma)=0
\end{array}$
\end{center}
\caption{\label{fig:sequences1} All sequences of length $4$ with two $2$s and two $1$s.}
\end{figure}

The observed parallelism between Figures \ref{fig:sequences} and \ref{fig:sequences1} is in fact not a coincidence. MacMahon showed in \cite{Mac} that when considering the set of sequences $\mathcal{S}_n^{\:\! m}(k_1,\ldots,k_m)\,$, the generating functions for major index and inversions are equal: a fact which is now well-known. Stated precisely, if $m,n,k_1,\ldots,k_m$ are nonnegative integers such that $k_1+\cdots+k_m=n\,$, then

\begin{equation}\label{eq:MacMahon} {n \choose k_1,\ldots, k_m}_q \, = \, \sum_{\sigma \in \mathcal{S}_n^{\:\! m}(k_1, \ldots,k_m)} q^{\text{inv}(\sigma)} \, = \, \sum_{\sigma \in \mathcal{S}_n^{\:\! m}(k_1, \ldots,k_m)} q^{\text{maj}(\sigma)} \, . \end{equation}
$\;$\\[-1.5em]

The following two lemmas and corollary will develop additional familiarity with the major index statistic while also proving vital in the later theorem.

\begin{lemma}\label{lem:maj_XXX}
Let $m,n,k$ be nonnegative integers such that $n-m+1 \geq k+1\,$, 
\begin{align*}
\mathcal{M}_n^{\:\! m}(k) \, \coloneqq& \, \{ \, \sigma \in \mathcal{S}_n^{\:\! m} \, \mid \, \emph{maj}(\sigma)=k \, \} \, , \\[2pt] A_i \, =& \, \{ \, \sigma \in \mathcal{M}_n^{\:\! m+1}(k) \, \mid \, \sigma_{n-m+i}=\sigma_{n-m+i+1} \, \} \; \emph{ when } \; 1 \leq i \leq m-1 \, , \\[2pt] A_m \, =& \, \{ \, \sigma \in \mathcal{M}_n^{\:\! m+1}(k) \, \mid \, \sigma_n=m+1 \, \} \,  . 
\end{align*}
Then, 
\[ \mathcal{M}_n^{\:\! m+1}(k) 
\, \setminus \, \bigcup_{i \in [m]} A_i \: = \: \{ \, \sigma \in \mathcal{M}_n^{\:\! m+1}(k) \, \mid \, \sigma_{k+1}=1\;\; \emph{and} \;\; \forall \, j \in [m] \,,\; \sigma_{n-m+j}=j \,  \} \, . \]
\end{lemma}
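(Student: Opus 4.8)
The plan is to prove the set equality by establishing both inclusions, with the reverse inclusion being a routine verification and the forward inclusion carrying the real content. For the inclusion $\supseteq$, I would take a sequence $\sigma$ belonging to the right-hand side, so that $\sigma_{n-m+j}=j$ for every $j \in [m]$. Then for each $i \in [m-1]$ we have $\sigma_{n-m+i}=i \neq i+1 = \sigma_{n-m+i+1}$, so $\sigma \notin A_i$, and $\sigma_n = \sigma_{n-m+m} = m \neq m+1$, so $\sigma \notin A_m$. Since $\sigma \in \mathcal{M}_n^{\:\! m+1}(k)$ by hypothesis, it lies in $\mathcal{M}_n^{\:\! m+1}(k) \setminus \bigcup_{i \in [m]} A_i$, as required. (The condition $\sigma_{k+1}=1$ is not even needed here.)

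The key observation driving the forward inclusion is a constraint that $\text{maj}(\sigma)=k$ places on descents. Since the major index is the sum of all descent positions and each such position is a positive integer, no descent can occur at a position exceeding $k$; that is, $\sigma_a \leq \sigma_{a+1}$ whenever $a \geq k+1$. I would isolate this as the first step of the argument, since it gets used twice. The hypothesis $n-m+1 \geq k+1$ guarantees that every position in $\{ \, n-m+1, \, \ldots, \, n-1 \, \}$ satisfies $a \geq k+1$, and likewise that the range $k+1, \, \ldots, \, n-m+1$ is nonempty.

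Now take $\sigma$ in the left-hand side. Being outside each $A_i$ for $i \in [m-1]$ forces $\sigma_a \neq \sigma_{a+1}$ for $a = n-m+1, \, \ldots, \, n-1$, and combined with the no-descent fact this upgrades to strict ascents $\sigma_{n-m+1} < \cdots < \sigma_n$. These are $m$ distinct strictly increasing values drawn from $[m+1]$, so they omit exactly one element of $\{ \, 1, \, \ldots, \, m+1 \, \}$; being outside $A_m$ gives $\sigma_n \neq m+1$, and since $\sigma_n$ is the maximum of the increasing run, the omitted element must be $m+1$, forcing $\sigma_{n-m+j}=j$ for every $j \in [m]$. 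Finally, applying the no-descent fact again across positions $k+1, \, \ldots, \, n-m+1$ gives $\sigma_{k+1} \leq \cdots \leq \sigma_{n-m+1} = 1$, and since all entries are at least $1$ this chain collapses to $\sigma_{k+1}=1$. The main obstacle is marshaling the interplay between the structural constraints (distinct consecutive entries and $\sigma_n \neq m+1$) and the arithmetic constraint on descent positions; once the no-descent observation is isolated the remainder is bookkeeping, with the only subtleties being the degenerate behavior when $m=1$ (where the increasing run is a single entry, so only $\sigma_n \neq m+1$ is used) and when $k = n-m$ (where the final non-decreasing range reduces to the single position $n-m+1$).
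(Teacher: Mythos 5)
Your proposal is correct and follows essentially the same route as the paper's proof: the key step in both is that $\mathrm{maj}(\sigma)=k$ forbids any descent at a position $\geq k+1$, so the tail $(\sigma_{n-m+1},\ldots,\sigma_n)$ is nondecreasing, whence avoidance of the $A_i$ upgrades it to a strict increase omitting $m+1$ and forces it to be $(1,\ldots,m)$, after which the same no-descent fact pins $\sigma_{k+1}=1$. Your write-up is in fact somewhat more explicit than the paper's (isolating the no-descent observation, handling the reverse inclusion and the $m=1$, $k=n-m$ edge cases), but the underlying argument is identical.
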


\begin{proof}
Let $\sigma$ be in $\mathcal{M}_n^{\:\! m+1}(k) \setminus \cup A_i$ and $\omega$ be the subsequence of $\sigma$ containing its last $m$ elements, namely $( \, \sigma_{n-m+1}, \, \ldots , \, \sigma_n \, )\,$. Since $n-m+1$ must be at least $k+1$ in value and $\text{maj}(\sigma)$ is equal to $k\,$, the subsequence $\omega$ must be nondecreasing. In addition, since $\sigma$ is not in $\cup A_i\,$, the subsequence $\omega$ must be strictly increasing and not end in $m+1\,$. Given that the length of $\omega$ is $m\,$, it is forced that $\omega = ( \, 1,2, \ldots, m \,)\,$. The desired inclusion follows from observing that for every $k+1 \leq j \leq n-m+1\,$, the value of $\sigma_{j}$ must be $1$ or else the major index of $\sigma$ would be greater than $k\,$.

The reverse inclusion follows by the definitions of the $A_i$'s and $\mathcal{M}_n^{\:\! m+1}(k)\,$.
\end{proof}

\begin{corollary}\label{cor:maj_XXX}
Let $m,n,k$ be nonnegative integers such that $n-m+1\geq k+1\,$. Also let $A_1,\ldots,A_m$ be as in Lemma \ref{lem:maj_XXX}. Then,
\[ \bigg\vert \, \mathcal{M}_n^{\:\! m+1}(k) 
\, \setminus \, \bigcup_{i \in [m]} A_i \, \bigg\vert \, = \, \left| \, \{ \, \sigma \in \mathcal{M}_{k+1}^{\:\! m+1}(k) \, \mid \, \sigma_{k+1}=1 \, \} \, \right| \, . \] 
\end{corollary}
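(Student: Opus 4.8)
The plan is to exhibit an explicit bijection between the two sets counted by the corollary. By Lemma \ref{lem:maj_XXX}, the set on the left is \[ \{ \, \sigma \in \mathcal{M}_n^{\:\! m+1}(k) \, \mid \, \sigma_{k+1}=1 \;\; \text{and} \;\; \forall \, j \in [m] \,,\; \sigma_{n-m+j}=j \, \} \, , \] so every such $\sigma$ has a completely determined tail $( \, \sigma_{n-m+1}, \ldots, \sigma_n \, ) = ( \, 1,2,\ldots,m \, )$. First I would observe that this forced tail contributes nothing to the major index: since $\sigma_{k+1}=1$ and the major index is exactly $k$, all of the ``descent contributions'' must come from positions at most $k$, and in particular the ascending tail $1,2,\ldots,m$ starting at position $n-m+1 \geq k+1$ creates no descents. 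The idea is therefore that the information in $\sigma$ is entirely carried by its initial segment $( \, \sigma_1, \ldots, \sigma_{k+1} \, )$, while everything from position $k+1$ onward is forced.

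The natural map is truncation: send $\sigma = ( \, \sigma_1, \ldots, \sigma_n \, )$ to its first $k+1$ entries $\tau = ( \, \sigma_1, \ldots, \sigma_{k+1} \, )$. I would check that $\tau$ lands in the target set $\{ \, \tau \in \mathcal{M}_{k+1}^{\:\! m+1}(k) \mid \tau_{k+1}=1 \, \}$. Membership of $\tau_{k+1}=1$ is immediate from $\sigma_{k+1}=1$. For the major index, I would argue that $\text{maj}(\tau)=\text{maj}(\sigma)=k$: by the reasoning above, the only descents of $\sigma$ occur at positions in $[k]$ (positions $k+1$ through $n-1$ are all weak ascents, being either $1 \leq \sigma_{j+1}$ when $\sigma_j=1$ with $j \leq n-m$, or part of the strictly increasing tail), and truncating after position $k+1$ preserves exactly those descents together with their position-weights. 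This gives a well-defined map into the codomain.

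For the inverse, I would take any $\tau \in \mathcal{M}_{k+1}^{\:\! m+1}(k)$ with $\tau_{k+1}=1$ and extend it to a sequence $\sigma \in \mathcal{S}_n^{\:\! m+1}$ by appending $n-k-1$ entries so that positions $k+2, \ldots, n-m$ are all $1$ and the final $m$ positions are $1,2,\ldots,m$. I would verify that this $\sigma$ lies in the left-hand set: the appended portion adds no new descents (the transition $\tau_{k+1}=1$ to the first appended $1$ is an ascent-or-equal, the run of $1$'s is nondecreasing, and the tail $1,2,\ldots,m$ is strictly increasing), so $\text{maj}(\sigma)=\text{maj}(\tau)=k$, and by construction $\sigma_{k+1}=1$ and $\sigma_{n-m+j}=j$ for all $j \in [m]$. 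Truncation and extension are mutually inverse, establishing the bijection and hence the claimed equality of cardinalities. The main thing to be careful about — the only real obstacle — is bookkeeping the boundary position $k+1$ correctly: one must confirm that $n-m+1 \geq k+1$ guarantees enough room to insert the block of $1$'s between position $k+1$ and the forced tail, and that no descent is accidentally created or destroyed at the seam between the truncated part and the appended part; the hypothesis $n-m+1 \geq k+1$ is exactly what makes this gluing legitimate.
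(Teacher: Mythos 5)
Your proposal is correct and is essentially the paper's own argument: the paper likewise observes that for every $\sigma$ in $\mathcal{M}_n^{\:\! m+1}(k) \setminus \bigcup A_i$ the entries $\sigma_{k+2},\ldots,\sigma_n$ are forced and that $\mathrm{maj}(\sigma) = \mathrm{maj}\bigl(\,(\sigma_1,\ldots,\sigma_{k+1})\,\bigr)$, so truncation to the first $k+1$ entries is the bijection. You simply spell out the forced tail, the descent bookkeeping, and the inverse (extension) map in more detail than the paper does.
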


\begin{proof}
The result follows from observing that for every $\sigma$ in $\mathcal{M}_n^{\:\! m+1}(k) 
\, \setminus \, \cup A_i\,$, the value of elements $\sigma_{k+2},\ldots,\sigma_n$ are fixed and maj($\sigma)\,=\,\text{maj}(\sigma')\,$, where $\sigma'$ is equal to $(\,\sigma_1 \, , \, \ldots \, , \, \sigma_{k+1}\, )\,$.
\end{proof}

\begin{lemma}\label{lem:maj_XXX_cap}
Let $m,n,k$ be nonnegative integers such that $n-m+1\geq k+1\,$. Also let $A_1,\ldots,A_m$ be as in Lemma \ref{lem:maj_XXX}. If $J$ is a subset of $[m]$ with $|J|=i\,$, then
\[ \left| \, \bigcap_{j \in J} A_j \, \right| \, = \, \left| \, \mathcal{M}_{n-i}^{m+1}(k) \, \right| \, .  \]
\end{lemma}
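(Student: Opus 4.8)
The plan is to build an explicit bijection $\Phi \colon \bigcap_{j \in J} A_j \to \mathcal{M}_{n-i}^{m+1}(k)$ by deleting, for each $j \in J$, the single entry of $\sigma$ in position $n-m+j$. Everything hinges on a descent bound already underlying Lemma~\ref{lem:maj_XXX}: since $n-m+1 \ge k+1$, every descent of a sequence with major index $k$ sits at a position $a \le k \le n-m$ (the descent positions are positive integers summing to $k$, so none exceeds $k$). Hence for any $\sigma \in \mathcal{M}_n^{m+1}(k)$ the last $m$ entries $(\sigma_{n-m+1}, \ldots, \sigma_n)$ are nondecreasing, and the only descent that can involve this block at all is the boundary descent at position $n-m$.

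First I would check that $\Phi$ lands in $\mathcal{M}_{n-i}^{m+1}(k)$. The deleted positions $n-m+j$ are all $\ge n-m+1$, hence strictly larger than every descent position, so no interior descent is shifted, and a nondecreasing block stays nondecreasing after entries are removed, so no new descent appears inside it. Each deletion is legitimate: for $j \in J$ with $j \le m-1$ the constraint of $A_j$ gives $\sigma_{n-m+j}=\sigma_{n-m+j+1}$, so we erase a letter equal to its right neighbor, and for $j=m$ the constraint of $A_m$ gives $\sigma_n=m+1$, so we erase the maximal letter at the very end. The only delicate point is the boundary descent at position $n-m$: if $1 \in J$ then $\sigma_{n-m+1}$ is erased, but writing $j_0 = \min([m]\setminus J)$ the chained equalities from $A_1,\ldots,A_{j_0-1}$ force $\sigma_{n-m+1}=\sigma_{n-m+j_0}$, so the new right neighbor of $\sigma_{n-m}$ equals the old one and the boundary descent is unaffected (in the extreme case $J=[m]$ the same chain forces $\sigma_{n-m+1}=m+1$, so no boundary descent existed). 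Thus $\text{maj}$ is preserved and the image has length $n-i$ with entries in $[m+1]$.

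Next I would produce the inverse $\Psi$. Given $\tau \in \mathcal{M}_{n-i}^{m+1}(k)$, the same bound makes its last $m-i$ entries nondecreasing, because they occupy positions $n-m+1, \ldots, n-i$, all $\ge k+1$. I then reassemble a block of length $m$ by placing these $m-i$ letters, in order, into the block positions indexed by $[m]\setminus J$, and filling each position $j \in J$ (scanning right to left) with a copy of its already-determined right neighbor when $j \le m-1$, or with $m+1$ when $j=m$. A short adjacent-entry comparison shows the reconstructed block is again nondecreasing with values in $[m+1]$ and meets every constraint defining the $A_j$, $j\in J$; appending it to the unchanged first $n-m$ entries of $\tau$ yields a sequence whose descents, hence major index, coincide with those of $\tau$. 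By construction $\Phi$ and $\Psi$ invert each other, giving $\left|\bigcap_{j\in J} A_j\right| = \left|\mathcal{M}_{n-i}^{m+1}(k)\right|$.

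The step I expect to be the main obstacle is the bookkeeping around clustered and boundary constraints: runs of consecutive indices in $J$ chain the equalities together, and when $m \in J$ this chain can pin the tail to repeated copies of $m+1$, so I must verify that the right-to-left filling in $\Psi$ is well-defined and that deleting the block positions $J$ genuinely recovers the last $m-i$ letters of $\tau$. Once the nondecreasing structure of the final block is established, however, every one of these checks collapses to comparing neighboring entries, and the real content is exactly the maj-invariance of deletion and insertion isolated above—precisely the phenomenon Corollary~\ref{cor:maj_XXX} exploits in the fully forced case.
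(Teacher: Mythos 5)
Your proof is correct and takes essentially the same route as the paper's: the bijection is exactly the deletion map $\sigma \mapsto \overline{\sigma}$ that removes the block entries $\omega_j$ for $j \in J$, with $\mathrm{maj}$ preserved because the hypothesis $n-m \ge k$ confines all descent positions to $\{1,\ldots,n-m\}$. If anything, your write-up is more explicit than the paper's, which disposes of surjectivity with a ``loss of one degree of freedom'' remark rather than constructing the inverse insertion map and checking the boundary descent at position $n-m$ as you do.
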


\begin{proof}
Let $\varphi \colon \cap A_j \rightarrow \mathcal{M}_{n-i}^{m+1}(k)$ via $\sigma \mapsto \overline{\sigma}\,$, where $\sigma$ is $( \, \sigma_1, \, \ldots, \, \sigma_{n-m} \, , \, \omega_1 \, , \, \ldots \, , \, \omega_m \,)$ and $\overline{\sigma}$ is the subsequence of $\sigma$ with $\omega_j$ removed for every $j \in J\,$. Note that $\overline{\sigma}$ is of the proper length for the expressed codomain of $\varphi\,$. Also note that the elements of $\sigma$ whose indices are accounted for by maj($\sigma$) are unaffected by $\varphi\,$: when $|J|< m\,$, the values of $n-m$ is at least $k\,$; when $|J|=m\,$, every $\omega_i$ equals $m+1\,$. As such, the values of maj($\sigma$) and maj($\overline{\sigma}$) are equal. Hence, the image of $\varphi$ is contained within the desired codomain.

To show surjectivity, observe that each $A_j$ in $\cap A_j$ induces a loss of one degree of freedom in the expression of any $\sigma$ from $\mathcal{M}_n^{m+1}(j)\,$. Viewing this loss as being induced on the element $\omega_j\,$, the map $\varphi$ results in $\overline{\sigma}$ being free from the adjacent element equality that is forced by the $A_j$'s.

To show injectivity, consider $\sigma^1,\sigma^2$ in $\cap A_j$ such that $\sigma^1$ and $\sigma^2$ are unequal. Let $a$ be the largest index of element such that $\sigma_a^1$ differs from $\sigma_a^2\,$. If $a$ is greater than $n-m\,$, the result follows from observing that $\sigma_a^1$ and $\sigma_a^2$ are necessarily not among the $\omega_j$ removed by $\varphi\,$. Should $a$ be at most $n-m\,$, the result follows given that such $\sigma_a^1$ and $\sigma_a^2$ are unaffected by $\varphi\,$.
\end{proof}
$\;$\\[-1.5em]

\subsection{The Insertion Method}

We will now describe a construction, called The Insertion Method, that provides a bridge between fundamental sequences and the major index statistic. This construction, first developed by Carlitz \cite{Carlitz} and later clarified by Wilson \cite{Wilson}, will be an essential component of the proof for the upcoming theorem.

Let $m,n$ be nonnegative integers, and let $(F_1, \ldots, F_m)$ be a fundamental sequence in $\mathcal{F}_n^{\:\! m}\,$. For every $v$ in $[m]\,$, list the elements of $F_v$ in nonincreasing order, labeling them as $f_{v,1} \geq \cdots \geq f_{v,k_v}$ where $k_v$ equals $|F_v|\,$. The sequence $(\, f_{1,1} \, , f_{1,2} \, , \ldots, f_{m,k_m} \, )$ will be referred to as $\tau = ( \, \tau_1 ,\ldots, \tau_n \, )\,$. Also define the value function $v \colon [n] \rightarrow [m]$ such that $v(i)$ equals $j\,$, where $\tau_i$ corresponds to its respective $f_{j,k}\,$. We will build a sequence $\sigma$ in $\mathcal{S}_n^{\:\! m}$ inductively using $\tau$ and $v\,$.

Let $\sigma^1 \, = \, \left( \, v(1) \, \right)\,$. For every $2\leq i \leq n\,$, there is some $a \in [i]$ such that $\sigma_a^i$ equals $v(i)\,$. Moreover, the sequence $\sigma^i$ shall be of the form \[ \sigma_b^i \, = \, \begin{cases} v(i) \;\; \text{when} \;\; b=a \: , \\[2pt] \sigma_b^{i-1} \;\; \text{when} \;\; 1\leq b < a \: , \\[2pt] \sigma_{b-1}^{i-1} \;\; \text{when} \;\; a < b \leq i \: .  \end{cases} \] The value $a$ shall be determined by the following process: \begin{enumerate}
    \item Label $\sigma^i_i$ with a zero. \\[-1.6em]
    \item Working greatest to least among $j$ in $[i-2]\,$, for every $\sigma_j^{i-1} > \sigma_{j+1}^{i-1}\,$ label $\sigma_{j+1}^i$ with successively increasing positive integers $1,2,3,\ldots, d$. \\[-1.6em]
    \item Working least to greatest among $j$ in $[i-1]\,$, if $\sigma_j^i$  is currently unlabeled, label $\sigma_j^i$ with successively positive integers $d+1, d+2, \ldots, i-1\,$. \\[-1.6em]
    \item Find the $\sigma_j^i$ labeled with a $\tau_i\,$, and let $a$ equal $j\,$.
\end{enumerate}

\begin{example}\label{ex:inductive_maj}
Consider the fundamental sequence \[ \big( \, F_1,F_2,F_3,F_4 \, \big) \, = \, \big( \: \{ 0,0 \} \, , \, \{ 1 \} \, , \, \{2,3\} \, , \{1,5\} \: \big) \, . \] Note the contents of Figure \ref{fig:inductive_ex}.

\begin{figure}[!htb]
\begin{center}
\begin{tabular}{c|c|c|c|c|c}
   $i$ & $\tau_i$ & $v(i)$ & Labeling for $\sigma^i$ & $\sigma^i$ & maj($\sigma^i$)  \\[2pt] \hline\hline
    1 & 0 & 1 &  & (\,1\,) & 0 \\[1pt] \hline
    2 & 0 & 1 & (\,{\small 1\,,\,0}\,) & (\,1\,,\,1\,) & 0 \\[1pt] \hline
    3 & 1 & 2 & (\,{\small 1\,,\,2\,,\,0}\,) & (\,2\,,\,1\,,\,1\,) & 1 \\[1pt] \hline
    4 & 3 & 3 & (\,{\small 2\,,\,1\,,\,3\,,\,0}\,) & (\,2\,,\,1\,,\,3\,,\,1\,) & 4 \\[1pt] \hline
    5 & 2 & 3 & (\,{\small 3\,,\,2\,,\,4\,,\,1\,,\,0}\,) & (\,2\,,\,3\,,\,1\,,\,3\,,\,1\,) & 6 \\[1pt] \hline
    6 & 5 & 4 & (\,{\small 3\,,\,4\,,\,2\,,\,5\,,\,1\,,\,0}\,) & (\,2\,,\,3\,,\,1\,,\,4\,,\,3\,,\,1\,) & 11 \\[1pt] \hline
    7 & 1 & 4 & (\,{\small 4\,,\,5\,,\,3\,,\,6\,,\,2\,,\,1\,,\,0}\,) & (\,2\,,\,3\,,\,1\,,\,4\,,\,3\,,\,4\,,\,1\,) & 12 \\[1pt] \hline
\end{tabular}
\end{center}
\caption{\label{fig:inductive_ex} The construction of $\sigma$ for Example \ref{ex:inductive_maj}}
\end{figure}
\end{example}

Proof of the facts $\text{maj}(\sigma^i) \, = \, \text{maj}(\sigma^{i-1})+\tau_i$ and that The Insertion Method provides a bijection from $\mathcal{F}_n^{\:\! m}$ to $\mathcal{S}_n^{\:\! m}$ are omitted here as they are contained in \cite{Carlitz}.

\begin{proposition}\label{prop:F_n^m(k)}
Let $m,n,k$ be nonnegative integers, and consider the following sets
\begin{align*}
    (i)& \;\; \{ \, (F_1,\ldots,F_m) \in \mathcal{F}_n^{\:\! m}(k) \, \mid \, 0\not\in F_2\, ,\ldots, \, 0 \not\in F_m \, \} \, ; \\
    (ii)& \;\; \{ \, \sigma\in \mathcal{S}_n^{\:\!m} \, \mid \, \emph{maj}(\sigma)=k \;\; \emph{and} \;\; \sigma_n=1 \, \} \, .
\end{align*}
When restricted to $(i)\,$, The Insertion Method provides a bijection onto $(ii)\,$.
\end{proposition}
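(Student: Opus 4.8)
The plan is to lean on the two facts quoted from \cite{Carlitz} immediately before the statement: that The Insertion Method is a bijection from $\mathcal{F}_n^{\:\! m}$ onto $\mathcal{S}_n^{\:\! m}$, and that $\text{maj}(\sigma^i) = \text{maj}(\sigma^{i-1}) + \tau_i$ at every step. Telescoping the recursion gives $\text{maj}(\sigma) = \sum_{i=1}^n \tau_i = \sum_{v=1}^m \sum_{x \in F_v} x$, so the major index of the output equals the total of all entries of the input fundamental sequence. Restricting the bijection to the fiber where this total equals $k$ — that is, to $\mathcal{F}_n^{\:\! m}(k)$ — therefore yields a bijection onto $\mathcal{M}_n^{\:\! m}(k)$. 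This disposes of the requirement $\text{maj}(\sigma) = k$ in set $(ii)$, and reduces the proposition to showing that, for inputs from $\mathcal{F}_n^{\:\! m}(k)$, the side conditions $0 \not\in F_2, \ldots, 0 \not\in F_m$ hold if and only if $\sigma_n = 1$.

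To establish this equivalence I would track the final coordinate of $\sigma^i$ through the construction. In the labeling procedure only Step 1 uses the label $0$, assigning it to the freshly appended position $\sigma^i_i$, while Steps 2 and 3 use strictly positive labels. Hence the insertion index $a$ equals $i$ exactly when $\tau_i = 0$, in which case $\sigma^i_i = v(i)$; when $\tau_i \neq 0$ one has $a < i$, and the case formula for $\sigma^i$ gives $\sigma^i_i = \sigma^{i-1}_{i-1}$. Thus the last coordinate is overwritten with $v(i)$ precisely at the steps inserting a zero and is otherwise carried forward unchanged. Writing $i^\ast$ for the largest index with $\tau_{i^\ast} = 0$, this proves the key identity $\sigma_n = v(i^\ast)$.

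It then remains to read off $v(i^\ast)$ from the zero-pattern of $(F_1, \ldots, F_m)$. The entries of $\tau$ coming from $F_v$ occupy the block of positions from $K_{v-1}+1$ to $K_v$ and are listed in nonincreasing order, so any zero of $F_v$ occurs at the block's end, at position $K_v$. Consequently $i^\ast = K_{v_{\max}}$ and $v(i^\ast) = v_{\max}$, where $v_{\max} = \max\{\, v \mid 0 \in F_v \,\}$. Since the defining constraint (with $K_0 = 0$) forces every entry of $F_1$ to be zero, the multiset $F_1$ contains only zeros; in particular, when $n \geq 1$ the sequence $\tau$ has at least one zero, so $v_{\max}$ is well defined. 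The conditions $0 \not\in F_2, \ldots, 0 \not\in F_m$ then amount to $v_{\max} = 1$, equivalently $\sigma_n = v(i^\ast) = 1$, while a zero in some $F_j$ with $j \geq 2$ forces $v_{\max} \geq 2$ and hence $\sigma_n \geq 2$. Restricting the bijection of the first paragraph to set $(i)$ thus maps it exactly onto set $(ii)$.

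The one genuinely delicate point I anticipate is the careful reading of the labeling rules required to justify that insertion happens at the tail if and only if $\tau_i = 0$ — everything else is telescoping and block-position bookkeeping. I would also briefly dispatch the degenerate boundary cases ($n = 0$, and the fact that under the hypotheses of $(i)$ an empty $F_1$ would cascade, through the defining constraints, into every block being empty) so that $i^\ast$ and $v_{\max}$ are unambiguous.
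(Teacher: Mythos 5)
Your proposal is correct and follows essentially the same route as the paper's own (very terse) proof: telescoping $\text{maj}(\sigma^i)=\text{maj}(\sigma^{i-1})+\tau_i$ to handle the condition $\text{maj}(\sigma)=k$, and observing that the insertion lands at the tail position exactly when $\tau_i=0$, so that the zero-pattern of $(F_1,\ldots,F_m)$ governs $\sigma_n$. You supply more justification than the paper does (the identity $\sigma_n=v(i^\ast)$ and the block-position bookkeeping make precise the biconditionals the paper merely asserts), but the underlying argument is the same.
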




\begin{proof}
Consider the following two biconditional statements:
\begin{center}
$\forall \: v(i)>1\, , \;\, \tau_i \neq 0 \;\;\; \Longleftrightarrow \;\;\; \sigma_i^i = 1\, , \;\, \forall \: i \in [n] \;\;\; \Longleftrightarrow \;\;\; \sigma_n=1 \,$; \\[8pt]
$\text{maj}(\sigma^i) \, = \, \text{maj}(\sigma^{i-1})+\tau_i \;\;\; \Longleftrightarrow \;\;\; \text{maj}(\sigma) \, = \, \tau_1 +\cdots+\tau_n \,$.
\end{center}
It follows that The Insertion Method maps $(i)$ into $(ii)$ and does so bijectively.
\end{proof}

\subsection{Generalized Galois Numbers}

We will now define a generalized Galois number, which can be found in \cite{Vinroot} and whose coefficients will be the central object of our theorem.

\begin{definition}\label{def:galois_num}
If $m,n$ are nonnegative integers, then \[ G_n^{\:\!m} \, \coloneqq \, \sum_{\substack{k_1+ \cdots +k_m \:\! = \:\! n \\[2pt] k_i \geq 0}} {n \choose {k_1,\ldots,k_m}}_q \: . \] This polynomial can be referred to as the \textbf{generalized Galois number of $\bm{(m,n)}$}\,.
\end{definition}

\begin{figure}[!htb]
\begin{align*}
G_1^{\:\! 3} \, &= \, 3 \,  &  G_2^{\:\! 3} \, &= \, 3q+6 \,  \\[12pt]
G_3^{\:\! 3} \, &= \, q^3+8q^2+8q+10\,   &  G_4^{\:\! 3} \, &= \, 3q^5 + \cdots + 18q^3 + 21q^2 + 15q +15 \,  \\[12pt]
G_5^{\:\! 3} \, &= \, 3q^8 + \cdots +45q^3 + 39q^2 + 24q +21\, \hspace{0.1in}  & G_6^{\:\! 3} \, &= \, q^{12} + \cdots + 82q^3 + 62q^2+35q + 28 \, 
\end{align*}
\caption{\label{fig:finite_diff} Generalized Galois numbers $G_1^{\:\! 3}\, , \, \ldots \, , G_6^{\:\! 3}\,$.}
\end{figure}

The generalized Galois numbers of $(2,n)$ are precisely the Galois numbers that were defined by Goldman and Rota \cite{rota}. Figure \ref{fig:finite_diff} contains examples of generalized Galois numbers of $(3,n)$ that were calculated using a recursive relation from \cite{Vinroot}.

\begin{proposition}\label{prop:major_GaloisNumber}
If $m,n$ are nonnegative integers, then
\[ G_n^{\:\! m} \, = \, \sum_{\sigma \in \mathcal{S}_n^{\:\! m}} q^{\emph{inv}(\sigma)} \, = \, \sum_{\sigma \in \mathcal{S}_n^{\:\! m}} q^{\emph{maj}(\sigma)} \, . \]
\end{proposition}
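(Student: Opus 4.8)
The plan is to unpack Definition \ref{def:galois_num} and recognize the unrestricted set $\mathcal{S}_n^{\:\! m}$ as the disjoint union of the fixed-content sets $\mathcal{S}_n^{\:\! m}(k_1,\ldots,k_m)\,$. First I would start from the definition, writing $G_n^{\:\! m}$ as the sum of ${n \choose {k_1,\ldots,k_m}}_q$ over all compositions $k_1+\cdots+k_m=n$ with each $k_i\geq 0\,$. Applying Proposition \ref{prop:inv_multinom} to each summand replaces the $q$-multinomial coefficient with $\sum_{\sigma\in\mathcal{S}_n^{\:\! m}(k_1,\ldots,k_m)} q^{\text{inv}(\sigma)}\,$, turning $G_n^{\:\! m}$ into a double sum: first over compositions, then over the sequences of that prescribed content.

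The key observation is that every sequence $\sigma\in\mathcal{S}_n^{\:\! m}$ has a well-defined content, meaning there is exactly one composition $(k_1,\ldots,k_m)$ for which $\sigma$ contains $k_i$ copies of $i$ for each $i\,$; consequently $\mathcal{S}_n^{\:\! m} = \bigsqcup_{k_1+\cdots+k_m=n} \mathcal{S}_n^{\:\! m}(k_1,\ldots,k_m)\,$. Collapsing the double sum over compositions and over the corresponding fixed-content sets into a single sum over this disjoint union yields $\sum_{\sigma\in\mathcal{S}_n^{\:\! m}} q^{\text{inv}(\sigma)}\,$, establishing the first equality. For the second equality I would run the identical argument but invoke MacMahon's result, equation \eqref{eq:MacMahon}, which supplies ${n \choose {k_1,\ldots,k_m}}_q = \sum_{\sigma\in\mathcal{S}_n^{\:\! m}(k_1,\ldots,k_m)} q^{\text{maj}(\sigma)}\,$; the same disjoint-union collapse then gives $G_n^{\:\! m} = \sum_{\sigma\in\mathcal{S}_n^{\:\! m}} q^{\text{maj}(\sigma)}\,$.

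I expect no genuine obstacle here: the entire content is the partition of $\mathcal{S}_n^{\:\! m}$ by content, together with the two generating-function identities already in hand. The only point requiring the slightest care is confirming that the index set of the sum in Definition \ref{def:galois_num} — namely compositions of $n$ into $m$ nonnegative parts — coincides exactly with the index set of the disjoint-union decomposition, so that no sequence is counted twice and none is omitted. Once that bookkeeping is checked, both equalities are immediate.
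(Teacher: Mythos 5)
Your argument is correct and is exactly the reasoning the paper compresses into its one-line proof: apply Definition \ref{def:galois_num} together with Equation (\ref{eq:MacMahon}) termwise and collapse the double sum using the partition of $\mathcal{S}_n^{\:\! m}$ by content. No differences worth noting.
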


\begin{proof}
The result follows from Definition \ref{def:galois_num} and Equation (\ref{eq:MacMahon}).
\end{proof}
$\;$\\[-1.5em]

One final well-known definition is needed to concisely state the theorem.

\begin{definition}\label{def:nabla}
Let $f \colon \mathbb{Z} \rightarrow \mathbb{Z}$ be a function, and define \textbf{the finite difference of $\bm{f}$} to be \[ \nabla f \colon \mathbb{Z} \rightarrow \mathbb{Z} \quad \emph{via} \quad n \mapsto f(n)-f(n-1) \, . \] \end{definition}

Inductively defining the $m^{\text{th}}$-finite difference of $f$ to be $\nabla^m f \, \coloneqq \, \nabla\left( \, \nabla^{m-1} f \, \right)\,$ for any positive integer $m\geq 2\,$, a well-known result follows
\begin{equation}\label{eq:m^th_difference} \nabla^m \:\! f(n) \, = \, \sum_{i=0}^m \:\! (-1)^{i} \:\! {{m} \choose i} \:\! f(n-i) \, . \end{equation}

Letting $f_k^{\:\! 3}(n)$ be the coefficient of $q^k$ in the simplified polynomial $G_n^{\:\! 3}\,$, Figure \ref{fig:finite_diff_comp} contains some example finite difference computations using the polynomials in Figure \ref{fig:finite_diff}.

\begin{figure}[!htb]
\begin{center}
$\begin{array}{ccccccc}
\nabla^2\:\! f_2^{\:\! 3}(3) \, = \, 8 \,  & \hspace{.25in} & \nabla^2\:\! f_2^{\:\! 3}(4) \, = \, 5 \,  & \hspace{.25in} & \nabla^2\:\! f_2^{\:\! 3}(5) \, = \, 5 \,  & \hspace{.25in} & \nabla^2\:\! f_2^{\:\! 3}(6) \, = \, 5 \,   \\[12pt]
\nabla^2\:\! f_3^{\:\! 3}(3) \, = \, 1 \,  & \hspace{.25in} & \nabla^2\:\! f_3^{\:\! 3}(4) \, = \, 16 \,  & \hspace{.25in} & \nabla^2\:\! f_3^{\:\! 3}(5) \, = \, 10 \,  & \hspace{.25in} & \nabla^2\:\! f_3^{\:\! 3}(6) \, = \, 10 \, 
\end{array}$
\end{center}
\caption{\label{fig:finite_diff_comp} Sample finite difference computations using $f_k^3(n)\,$.}
\end{figure}

Observe that $\nabla^2\:\! f_2^{\:\! 3}(4) \, , \, \nabla^2\:\! f_2^{\:\! 3}(5) \, , \, \nabla^2\:\! f_2^{\:\! 3}(6)$ are equal to the number of integer partitions of $2$ with $2$ kinds (from Figure \ref{fig:part_kinds}).

\begin{theorem}\label{theorem}
Let $m,n,k$ be nonnegative integers such that $n \geq m+k\,$. Then, \[ \nabla^{m} \:\! f_k^{\:\! m+1}(n) \, = \, \left| \, \mathcal{P}_k^{m}\, \right| \, , \] where $f_k^{m+1}(n)$ evaluates to the coefficient of $q^k$ in the simplified polynomial $G_n^{\:\! m+1}\,$. 
\end{theorem}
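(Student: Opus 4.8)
The plan is to recognize the $m$-th finite difference as an inclusion--exclusion (sieve) count and to verify that the set surviving the sieve is exactly the one the earlier propositions connect to $\mathcal{P}_k^m$. First, by Proposition \ref{prop:major_GaloisNumber} the polynomial $G_n^{m+1}$ is the major index generating function on $\mathcal{S}_n^{m+1}$, so $f_k^{m+1}(n)$ equals $\left| \mathcal{M}_n^{m+1}(k) \right|$, the number of length-$n$ sequences over $[m+1]$ with major index $k$. Expanding with Equation~(\ref{eq:m^th_difference}) for $f = f_k^{m+1}$ then yields
\[ \nabla^m f_k^{m+1}(n) \, = \, \sum_{i=0}^m (-1)^i \binom{m}{i} \left| \mathcal{M}_{n-i}^{m+1}(k) \right| \, . \]

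The central step is to read this alternating sum as the inclusion--exclusion formula for the complement of $\bigcup_{i \in [m]} A_i$ inside $\mathcal{M}_n^{m+1}(k)$, where the $A_i$ are the sets from Lemma \ref{lem:maj_XXX}. Lemma \ref{lem:maj_XXX_cap} supplies $\left| \bigcap_{j \in J} A_j \right| = \left| \mathcal{M}_{n-|J|}^{m+1}(k) \right|$, which depends on $J$ only through its cardinality; since there are $\binom{m}{i}$ subsets of $[m]$ of size $i$, the standard sieve expansion collapses term-by-term to the displayed sum. Hence $\nabla^m f_k^{m+1}(n) = \left| \mathcal{M}_n^{m+1}(k) \setminus \bigcup_{i \in [m]} A_i \right|$. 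Here the hypothesis $n \geq m+k$ is exactly the condition $n-m+1 \geq k+1$ demanded by Lemmas \ref{lem:maj_XXX} and \ref{lem:maj_XXX_cap}, so each set identity invoked is legitimate.

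From there the conclusion follows by concatenating the bijections already in hand. Corollary \ref{cor:maj_XXX} rewrites the surviving count as $\left| \{ \, \sigma \in \mathcal{M}_{k+1}^{m+1}(k) \mid \sigma_{k+1}=1 \, \} \right|$, collapsing the length from $n$ down to $k+1$. Proposition \ref{prop:F_n^m(k)} (the Insertion Method bijection), specialized to length $k+1$ with $m+1$ kinds, identifies this with $\left| \{ \, (F_1,\ldots,F_{m+1}) \in \mathcal{F}_{k+1}^{m+1}(k) \mid 0 \notin F_2, \ldots, 0 \notin F_{m+1} \, \} \right|$. Finally Proposition \ref{prop:kinds_fundamental}, applied with $n = k+1$ (valid since $k < k+1$), equates this with $\left| \mathcal{P}_k^m \right|$, closing the chain.

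I expect the main obstacle to be bookkeeping in the inclusion--exclusion step rather than any new idea: one must confirm that $\left| \bigcap_{j \in J} A_j \right|$ genuinely depends only on $|J|$ so that the $\binom{m}{i}$ factors appear with the correct multiplicity, and that the single hypothesis $n \geq m+k$ licenses Lemma \ref{lem:maj_XXX_cap} uniformly across all intersection terms (note each $n-i \geq n-m \geq k$). The remaining steps are a direct composition of Corollary \ref{cor:maj_XXX}, Proposition \ref{prop:F_n^m(k)}, and Proposition \ref{prop:kinds_fundamental}, with care only needed to match each statement's parameters to the specialization $n \mapsto k+1$ and $m \mapsto m+1$.
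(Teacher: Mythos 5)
Your proposal is correct and follows essentially the same route as the paper's own proof: identify $f_k^{\:\!m+1}(n-i)$ with $\left|\mathcal{M}_{n-i}^{\:\!m+1}(k)\right|$, read the $m$-th finite difference as inclusion--exclusion over the sets $A_i$ via Lemmas \ref{lem:maj_XXX} and \ref{lem:maj_XXX_cap}, then chain Corollary \ref{cor:maj_XXX}, Proposition \ref{prop:F_n^m(k)}, and Proposition \ref{prop:kinds_fundamental}. The only detail the paper adds is a separate (trivial) check of the degenerate case $m=0$, where Proposition \ref{prop:kinds_fundamental} does not formally apply since it assumes $m$ is positive.
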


\begin{proof}
By the definition of $\mathcal{M}_n^{\:\! m+1}(k)$ in Lemma \ref{lem:maj_XXX}, observe that $f_k^{\:\! m+1}(n-i)$ is equal to $\left| \, \mathcal{M}_{n-i}^{\:\! m+1}(k) \, \right|\,$. Applying this observation and Equation \ref{eq:m^th_difference}, we have that \[ \nabla^{m} \:\! f_k^{\:\! m+1}(n) \, = \, \sum_{i=0}^m \:\! (-1)^{i} \:\! {{m} \choose i} \:\! \left| \, \mathcal{M}_{n-i}^{\:\! m+1}(k) \, \right| \, . \] Note that the assumed relation $n \geq m+k$ satisfies the similar assumption of Lemma \ref{lem:maj_XXX} and Lemma \ref{lem:maj_XXX_cap}. Applying these two lemmas and the Principle of Inclusion and Exclusion, the following equality is yielded \[ \sum_{i=0}^m \:\! (-1)^{i} \:\! {{m} \choose i} \:\! \left| \, \mathcal{M}_{n-i}^{\:\! m+1}(k) \, \right| \, = \, \bigg\vert \, \mathcal{M}_n^{\:\! m+1}(k) 
\, \setminus \, \bigcup_{i \in [m]} A_i \, \bigg\vert \, , \] where $A_1 , \ldots, A_m$ are as defined in Lemma \ref{lem:maj_XXX}. Letting $\mathcal{T}$ be $\{ \, \sigma \in \mathcal{M}_{k+1}^{\:\! m+1}(k) \, \mid \, \sigma_{k+1}=1 \, \}$ and applying Corollary \ref{cor:maj_XXX}, it follows that \[ \nabla^{m} \:\! f_k^{\:\! m+1}(n) \, = \, \left| \, \mathcal{T} \, \right| \, . \] Since the rightmost element of every sequence in $\mathcal{T}$ is 1, Proposition \ref{prop:F_n^m(k)} applies to $\mathcal{T}$ and it follows that \[ \nabla^{m} \:\! f_k^{\:\! m+1}(n) \, = \, \left| \,  \{ \, (F_1,\ldots,F_{m+1}) \in \mathcal{F}_{k+1}^{\:\! m+1}(k) \, \mid  \, 0\not\in F_2\, ,\ldots, \, 0 \not\in F_{m+1} \, \} \, \right| \, . \] In the case that $m$ is greater than $0\,$, further applying Proposition \ref{prop:kinds_fundamental} achieves the desired result. Should $m$ equal $0\,$, the desired result occurs from observing that both sides of the equality in the theorem statement equal zero when $k\neq 0$ and equal 1 when $k=0\,$.
\end{proof}
$\;$\\[-1.5em]

Simply stated, Theorem \ref{theorem} expresses that as $n$ grows the $m^{\text{th}}$ finite difference of $f_k^{\:\! m+1}(n)$ is eventually constant, and the resulting constant is precisely the number of integer partitions of $k$ with $m$ kinds. Reflecting back to Figure \ref{fig:finite_diff_comp}, we can observe that the sample computations of $\nabla^2\:\!f_k^{\:\! 3}(n)$ become constant when $n$ is at least $k+2$ in value.

\begin{corollary}\label{cor:theorem}
If $n,k$ are nonnegative integers such that $n \geq k\,$, then \[ \frac{d^k}{dq^k} \left( \, \frac{G_{n+1}^2 - G_n^2}{k!} \, \right) \bigg\vert_{q=0} \, = \, \emph{part}(k) \, , \] where $\frac{d}{dq}$ is the conventional derivative operator and \emph{part(}$k$\emph{)} is the number of integer partitions.
\end{corollary}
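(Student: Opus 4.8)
The plan is to recognize the left-hand side as an ordinary coefficient-extraction operator and then reduce everything to the case $m=1$ of Theorem \ref{theorem}. First I would recall the elementary fact that for any polynomial $P(q) = \sum_j c_j q^j$ one has $\frac{1}{k!}\frac{d^k}{dq^k}P(q)\big\vert_{q=0} = c_k$: differentiating $k$ times sends $q^j$ to $\frac{j!}{(j-k)!}q^{j-k}$, and after evaluating at $q=0$ only the $j=k$ term survives, contributing exactly $c_k\,k!$. Applying this with $P = G_{n+1}^2 - G_n^2$ identifies the quantity inside the corollary with the coefficient of $q^k$ in $G_{n+1}^2 - G_n^2$.

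Next I would translate that coefficient into the $f_k$ notation. By the definition of $f_k^{m+1}$ as the coefficient of $q^k$ in the simplified $G_n^{m+1}$, the coefficient of $q^k$ in $G_{n+1}^2 - G_n^2$ equals $f_k^2(n+1) - f_k^2(n)$, which by Definition \ref{def:nabla} is precisely $\nabla f_k^2(n+1)$. Thus the entire left-hand side of the corollary collapses to $\nabla^1 f_k^{1+1}(n+1)$.

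The final step is to invoke Theorem \ref{theorem} with $m=1$. The hypothesis of that theorem, read at argument $n+1$, demands $n+1 \geq m+k = 1+k$, i.e.\ $n \geq k$, which is exactly the standing assumption of the corollary; so the theorem applies and yields $\nabla f_k^2(n+1) = |\mathcal{P}_k^1|$. It then remains only to observe that an integer partition of $k$ with a single kind, in the sense of Definition \ref{def:partitions_kinds}, is just a multiset $P_1$ of positive integers summing to $k$ — an ordinary integer partition of $k$ — so that $|\mathcal{P}_k^1| = \text{part}(k)$, completing the argument.

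The proof is essentially a matter of bookkeeping rather than of any deep new idea, so I do not anticipate a genuine obstacle; the one point requiring care is the index shift. The finite difference must be applied at $n+1$ (because the corollary differences $G_{n+1}^2$ and $G_n^2$) rather than at $n$, and one must confirm that this shift makes the corollary's range $n \geq k$ align exactly with the theorem's range $n \geq m+k$ in the $m=1$ case. Getting that alignment right is what guarantees the stated constant $\text{part}(k)$ is already attained for every $n \geq k$.
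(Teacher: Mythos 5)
Your proposal is correct and follows exactly the route the paper intends: the paper's own proof simply says the result ``follows directly from Theorem \ref{theorem} and Taylor's Theorem,'' and your write-up is precisely that argument made explicit (coefficient extraction via $\frac{1}{k!}\frac{d^k}{dq^k}\big\vert_{q=0}$, rewriting the coefficient of $q^k$ in $G_{n+1}^2 - G_n^2$ as $\nabla f_k^{2}(n+1)$, and invoking the theorem with $m=1$ at argument $n+1$ so that $n+1 \geq 1+k$ matches $n \geq k$). Your attention to the index shift and the identification $|\mathcal{P}_k^{1}| = \mathrm{part}(k)$ fills in exactly the bookkeeping the paper leaves implicit.
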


\begin{proof}
Follows directly from Theorem \ref{theorem} and Taylor's Theorem.
\end{proof}

\section{Concluding Remarks}

This research culminated in Theorem \ref{theorem}, which may be interesting to prove using the inversion statistic rather than the major index statistic. Also, it may be insightful to use analytical methods relating to the calculus of finite differences to achieve Theorem \ref{theorem}. Moreover, it may be worth investigating information buried within expressions of $q$-multinomial coefficients different from the Galois numbers. Furthermore, $q$-multinomial coefficients can be generalized to $p,q$-binomial coefficients \cite{p-q}, and many of this paper's results may be able to be extended.

\section*{Acknowledgments}

Special thanks to Alex Foster and Jonathan Winter for generously offering their computer programming skills in assistance of this project, and an additional mention of thanks to Alex for his pointing us to the Online Encyclopedia of Integer Sequences \cite{OEIS}.

{\footnotesize
}
 
{\footnotesize  
\medskip
\medskip
\vspace*{1mm} 
 
\noindent {\it Adrian Avalos}\\  
Coastal Carolina University \\
100 Chanticleer Drive East\\
Conway, SC 29528\\
E-mail: {\tt alavalos@coastal.edu}\\ \\  

\noindent {\it Mark Bly}\\  
Coastal Carolina University \\
100 Chanticleer Drive East\\
Conway, SC 29528\\
E-mail: {\tt mbly@coastal.edu}\\ \\

}

\vspace*{1mm}\noindent\footnotesize{\date{ {\bf Received}: April 31, 2017\;\;\;{\bf Accepted}: June 31, 2017}}\\
\vspace*{1mm}\noindent\footnotesize{\date{  {\bf Communicated by Some Editor}}}


\begin{thebibliography}{00}

\bibitem{com1} T. Abdeljawad, D. Baleanu, E. G\"undo\u gdu,  F. Jarad, On the Mittag-Leffler stability of $q$-fractional nonlinear dynamical systems, \textit{Proc. Ram. Acad. Ser. A Math. Phys. Tech. Sci. Inf. Sci.}, \textbf{12} (2011), 309--314.

\bibitem{rep1} G.E. Andrews, J.B. Olsson, Partition identities with an application to group representation theory, \textit{J. Reine Angew. Math.}, \textbf{413} (1991), 198--212.

\bibitem{sym2} L.C. Biedenharn, J.D. Louck, Inhomogeneous basis set of symmetric polynomials defined by tableaux, \textit{Proc. Nat. Acad. Sci. U.S.A.}, \textbf{87} (1990), 1441--1445.

\bibitem{hyper2} K. Bringmann, A. Folsom, R.C. Rhoades, Partial theta functions and mock modular forms as $q$-hypergeometric series, \textit{Ramanujan J.}, \textbf{29} (2012), 295--310.

\bibitem{rep2} S. Capparelli, A combinatorial proof of a partition identity related to level $3$ representations of a twisted affine Lie algebra, \textit{Comm. Algebra}, \textbf{23} (1995), 2959--2969.

\bibitem{str1} N. Caporaso, M. Cirafici, L. Griguolo, S. Pasquetti, D. Seminara, R.J. Szabo, Topological string and large $N$ phase transitions: nonchiral expansion of $q$-deformed Yang-Mills theory, \textit{J. High Energy Phys.}, (2006), 797--863.

\bibitem{Carlitz} L. Carlitz, A combinatorial property of $q$-Eulerian numbers, \textit{Amer. Math. Monthly}, \textbf{82} (1975), 51--54.

\bibitem{num1} H.H. Chan, P.C. Toh, New analogues to Ramanujan's partition identities, \textit{J. Number Theory}, \textbf{130} (2010), 1898--1913.

\bibitem{quant1} P. Cheung, V. Kac, \textit{Quantum Calculus}, Springer-Verlag, 2002.

\bibitem{p-q} R.B. Corcino, On $p,q$-binomial coefficients, \textit{Integers}, \textbf{8} (2008), 1--16.

\bibitem{hyper3} I. Gessel, D. Stanton, Applications of $q$-Lagrange inversion to basic hypergeometric series, \textit{Tras. Amer. Math. Soc.}, \textbf{277} (1983), 173--201.

\bibitem{rota} J. Goldman, G.C. Rota, The number of subspaces of a vector space, \textit{Proc. Third Waterloo Conf. on Combinatorics}, (1968), 75--83.

\bibitem{comb1} E. Goles, M. Morvan, H.D. Phan, Sandpiles and order structure of integer partitions, \textit{Discrete Appl. Math.}, \textbf{117} (2002), 51--64

\bibitem{comb2} C. Greene, G.J. Kleitman, Longest chains in the lattice of integer partitions ordered by majorization, \textit{European J. Combin.}, \textbf{7} (1986), 1--10.

\bibitem{Gupta} H. Gupta, C.E. Gwyther, J.C.P. Miller, \textit{Royal Society Mathematical Tables: Volume 4, Tables of Partitions}, Cambridge University Press, 1958.

\bibitem{uber} E. Heine, \"Uber die Riehne $1+\frac{(q^{\alpha}-1)(q^{\beta}-1)}{(q-1)(q^{\gamma}-1)} \, x + \frac{(q^{\alpha}-1)(q^{\alpha + 1}-1)(q^{\beta}-1)(q^{\beta + 1}-1)}{(q-1)(q^2-1)(q^{\gamma}-1)(q^{\gamma+1}-1)} \, x^2 + \cdots\,$, \textit{J. Reine Angew. Math}, \textbf{32} (1846), 210--212.

\bibitem{quant2} W. Heping, V. Gupta, The rate of convergence of $q$-Durrmeyer operators for $0<q<1\,$, \textit{Math. Methods Appl. Sci.}, \textbf{31} (2008), 1946--1955.

\bibitem{el1} T. Kamiya, S. Takeuchi, Complete ($p,q$)-elliptic integrals with application to a family of means, \textit{J. Class. Anal.} \textbf{10} (2017), 15--25.

\bibitem{comb3} S. Kanade, M.C. Russell, Staircases to analytic sum-sides for many new integer partition identities of Roger-Ramanujan type, \textit{Electron. J. Combin.}, \textbf{26} (2019), 1--33.

\bibitem{str2} E. Kiritsis, B. Pioline, On $R^4$ threshold corrections in type IIB string theory and ($p,q$)-string instantons, \textit{Nuclear Phys.} \textbf{508} (1997), 509--534.

\bibitem{Knuth} D.E. Knuth, Subspaces, subsets, and partitions, \textit{J. Combin. Theory}, \textbf{10} (1971), 178--180.

\bibitem{hyper4} C. Krattenthaler, H.M. Srivastava, Summations for basic hypergeometric series involving a $q$-analogue of the digamma function, \textit{Comput. Mat. Appl.}, \textbf{32} (1996), 73--91.

\bibitem{Mac} P.A. MacMahon, The indices of permutations and the derivation therefrom of functions of a single variable associated with the permutations of an assemblage of objects, \textit{Amer. J. Math.}, \textbf{35} (1913), 281--322.

\bibitem{hyper1} S.C. Milne, A $q$-analog of the Gauss summation theorem for hypergeometric series in $U(n)\,$, \textit{Adv. in Math.}, \textbf{72} (1988), 59--131.

\bibitem{com2} S. Malek, On complex singularity analysis for linear partial $q$-difference-differential equations using nonlinear differential equations, \textit{J. Dyn. Control Syst.}, \textbf{19} (2013), 69--93.

\bibitem{comb4} G. Moshkovitz, A. Shapira, Ramsey theory, integer partitions and a new proof of the Erd\H os-Szekeres theorem, \textit{Adv. Math.}, \textbf{262} (2014), 1107--1129.

\bibitem{num2} K. Ono, Distribution of the partition function modulo $m\,$, \textit{Ann. of Math.}, \textbf{151} (2000), 293--307.

\bibitem{sym1} H.J. Schmidt, J. Schnack, Partition functions and symmetric polynomials, \textit{Amer. J. Phys.}, \textbf{70} (2002), 53--57.

\bibitem{el2} R.N. Siddiqi, M.A. Rakha, $q$-beta and elliptic type integrals, \textit{Int. J. Appl. Math.}, \textbf{10} (2002), 385--401.

\bibitem{num3} A.V. Sills, Compositions, partitions, and Fibonacci numbers, \textit{Fibonacci Quart.}, \textbf{49} (2011), 348--354.

\bibitem{Enum} R.P. Stanley, \textit{Enumerative Combinatorics}, Cambridge University Press, 1997.

\bibitem{Old} P. Tardy, Sopra alcunae formole relativa ai coefficienti binomiali, \textit{G. Mat. Battaglini}, \textbf{3} (1865), 1--3.

\bibitem{OEIS} The On-Line Encyclopedia of Integer Sequences, available online at the URL: \url{https://oeis.org/}.

\bibitem{Vinroot} C.R. Vinroot, An enumeration of flags in finite vector spaces, \textit{Electron. J. Combin.}, \textbf{19} (2012), 1--9.

\bibitem{Wilson} A.T. Wilson, An extension of MacMahon's equidistribution theorem to ordered multiset partitions, \textit{Electron. J. Combin.}, \textbf{23} (2016), 1--22.

\end{thebibliography}
\end{document}